\newtheorem{lemma}{LEMMA}[section]
\newtheorem{proposition}[lemma]{PROPOSITION}
\newtheorem{corollary}[lemma]{COROLLARY}
\newtheorem{theorem}[lemma]{THEOREM}
\newtheorem{remark}[lemma]{REMARK}
\newtheorem{remarks}[lemma]{REMARKS}
\newtheorem{definition}[lemma]{DEFINITION}
\newtheorem{assumption}[lemma]{ASSUMPTION}
\newcommand{\real}{\mathbbm{R}}
\newcommand{\nat}{\mathbbm{N}}
\renewcommand{\a}{\alpha}
\renewcommand{\b}{\beta}
\newcommand{\g}{\gamma}
\newcommand{\vp}{\varphi}
\newcommand{\ve}{\varepsilon}
\newcommand{\reald}{{\real^d}}
\newcommand{\on}{\quad\text{ on }}
\newcommand{\und}{\quad\mbox{ and }\quad}
\newcommand{\inv}{^{-1}}
\newcommand{\ov}{\overline}
\newcommand{\W}{\mathcal W}  
\newcommand{\C}{\mathcal C}
\renewcommand{\H}{{\mathcal H}}
\newcommand{\B}{\mathcal B}
\newcommand{\M}{\mathcal M}
\newcommand{\K}{\mathcal K}
\newcommand{\dist}{\mbox{\rm dist}}
\newcommand{\supp}{\operatorname*{supp}}
\newcommand{\kapi}{\operatorname*{{cap_\ast}}}
\newcommand{\kapo}{\operatorname*{{cap^\ast}}}
\newcommand{\itemframe}%
{\setlength{\parskip}{10pt}\begin{enumerate} \setlength{\topsep}{10pt}%
\setlength{\itemsep}{15pt}\setlength{\parsep}{5pt}}
\newcommand{\vx}{\ve_x}
\newcommand{\Px}{\mathcal P}
\newcommand{\vc}{{V^c}}
\newcommand{\kap}{\operatorname*{cap}}
\newcommand{\es}{E_{\mathbbm P}}
\title{Liouville property, Wiener's test\\ and unavoidable sets  for Hunt processes} 
\author{Wolfhard Hansen}
\date{}
\begin{document}

\maketitle

\begin{abstract}

Let $(X,\W)$ be a balayage space, $1\in \W$, or -- equivalently -- let $\W$      
be the set of excessive functions of a Hunt process on a locally compact space~$X$ with countable base such that
$\W$  separates points, every function in $\W$ is the supremum of its
continuous minorants and there exist  strictly positive continuous       $u,v\in \W$ 
such that $u/v\to 0$  at infinity. We suppose that there is a~Green function 
$G>0$ for $X$, a metric $\rho$ for $X$ and a decreasing function $g\colon[0,\infty)\to (0,\infty]$ 
having the doubling property such that $G\approx g\!\circ\!\rho$.

Assuming that the constant function $1$ is harmonic and  balls are relatively compact, 
is is shown that every positive harmonic function is constant (Liouville property) and 
that  Wiener's test at infinity shows, if a given
set $A$ in $X$ is unavoidable, that is, if the process hits $A$ with probability one, wherever it starts. 

An application yields that locally finite unions of pairwise disjoint balls $B(z,r_z)$, $z\in Z$, 
which have a certain separation property with respect to a~suitable measure $\lambda$ on $X$
are unavoidable if and only if, for some/any point~$x_0\in X$,  the series $\sum_{z\in Z} g(\rho(x_0,z))/g(r_z) $ diverges.

The results generalize and, exploiting a zero-one law for hitting probabilities,  simplify recent work by
S.\,Gardiner and M.\,Ghergu,  A.\,Mimica and  Z.\,Vondra\v cek, and the author.

 {
 Keywords:  Hunt process; balayage space; unavoidable set; zero-one law;
Green function; equilibrium measure; capacity;  doubling property; Liouville property; Wiener's criterion; 
L\'evy process.

  MSC:     31B15, 31C15, 31D05, 60J25, 60J45, 60J65, 60J75.}
\end{abstract}

\section{Preliminaries and main results}


Let $X$ be a locally compact space with countable base.
Let $\C(X)$ denote the set of all continuous real functions on $X$
and let $\B(X)$ be the set of all Borel measurable numerical functions on $X$.
The set of all (positive) Radon measures on $X$ will be denoted by  $\M(X)$. 

Moreover, let $\W$ be a convex cone of positive lower 
semicontinuous numerical functions on~$X$ such that $1\in \W$ and $(X,\W)$ is a balayage space
(see \cite{BH}, \cite{H-course}  or \cite[Appendix]{HN-unavoidable}). In particular, the following holds:
\begin{itemize} 
\item[\rm (C)]
 $\W$ separates the points in $X$,
\[ 
              w=\sup\{v\in\W\cap \C(X)\colon v\le w\} \qquad\mbox{ for every } w\in \W,
\]
and there are strictly positive $u,v\in\W\cap \C(X)$ such that $u/v\to 0$  at~infinity.   
\end{itemize} 
Then there exists a Hunt process $\mathfrak X$ 
on $X$ such that $\W$ is the set $\es$ of excessive functions for the transition semigroup 
$\mathbbm P=(P_t)_{t>0}$ of $\mathfrak X$ (see \cite[IV.7.6]{BH} or \cite[Appendix]{HN-unavoidable}), that is,  
\[
             \W=\{v\in \B^+(X)\colon \sup\nolimits_{t>0} P_tv=v\}.
\]

We note that, conversely, given any sub-Markov semigroup $\mathbbm P=(P_t)_{t>0}$  on $X$
such that (C) is satisfied by its convex cone $\es$ of excessive functions, $(X,\es)$ is a~balayage space, 
and $\mathbbm P$ is the transition semigroup of a Hunt process
 (see \cite[Corollary 2.3.8]{H-course}
or \cite[Corollary A.5]{HN-unavoidable}).                   

For every numerical function $f$ on $X$, let
\[
         R_f:=\inf\{v\in \W\colon v\ge f\}.
\]
In particular, for every subset $A$ of $X$, we have reduced functions  $R_u^A$, $u\in \W$, and reduced measures
$\vx^A$, $x\in X$, defined by
\[
          R_u^A:=R_{1_Au}=\inf\{ v\in \W\colon v\ge u\mbox{ on } A\} \und  \int u\,d\vx^A=R_u^A(x).
\]
Clearly,  $R_u^A\le u$ on $X$ and $R_u^A=u$ on
$A$. 
If  $A$ is  open, then 
 \begin{equation}\label{red-W}
R_u^A\in\W.
\end{equation} 
 For a~general subset~$A$,
the greatest lower semicontinuous minorant~$\hat R_1^A$  of $R_1^A$
is contained 
in~$\W$, and  $\hat R_1^A=R_1^A$ on~$A^c$ (see \cite[p.\ 243]{BH}). 

If $A$ is  Borel measurable, then, for every $x\in X$,
\begin{equation}\label{connection}
               R_1^A(x)= P^x[T_A<\infty], 
\end{equation} 
where  $T_A(\omega):=\inf \{t\ge 0\colon X_t(\omega)\in A\}$ and, more generally,
\[
    \vx^A(B)=P^x[X_{T_A}\in B; T_A<\infty]
\]
for every Borel measurable set $B$ in  $X$  (see \cite[VI.3.14]{BH}).

For every open set $U$ in $X$, let $\H^+(U)$ denote the set of all 
functions $h\in\B^+(X)$ which are \emph{harmonic on~$U$} (in the sense of \cite{BH}), that is,
such that $h|_U\in \C(U) $  and
\begin{equation}\label{mv}
    \vx^\vc(h):=\int h\,d\vx^\vc=h(x) 
\end{equation} 
for every open $V$ such that $x\in V$ and $\ov V$ is  a compact in $U$.
 Let $\tilde\H^+(U)$ denote the (possibly larger) set of all $h\in\B^+(X)$ such that (\ref{mv})
holds, whenever $V$ is open,  $x\in V$ and $\ov V$ is compact in $U$.  
By \cite[VI.2.6]{BH}), for every set $A$ in $X$, 
\begin{equation}\label{A-harmonic}
     R_u^A\in \H^+(X\setminus \ov A), \qquad \mbox{ if }  u\in \W, \  u\le w\in \W\cap \C(X).
\end{equation}

{\it In the following let us assume 
that the constant function $1$ is harmonic on $X$.}

We recall that a subset $A$ of $X$ is called \emph{unavoidable}, if $R_1^A=1$ (or, equivalently, $\hat R_1^A=1$). 
Otherwise, it is called \emph{avoidable}, that is, $A$~is avoidable, if there exists $x\in X$
such that $R_1^A(x)<1$. The following zero-one law will play an important role
(for its proof and the proof of the subsequent corollary see \cite[Proposition 2.3]{HN-unavoidable}).

\begin{proposition}\label{so-simple}
For every $A\subset X$, 
\begin{equation}\label{0-1}
                  R_1^A=1 \quad \mbox{ or }\quad  \inf\nolimits_{x\in X} R_1^A(x)=0.
\end{equation} 
\end{proposition}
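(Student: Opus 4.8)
The plan is to recast the dichotomy as the statement that
$m:=\inf_{x\in X}R_1^A(x)\in\{0,1\}$. Indeed, since $1\in\W$ dominates $1$ on $A$ we have $R_1^A\le 1$, so $m=1$ forces $R_1^A\equiv 1$; thus (\ref{0-1}) is equivalent to ruling out the case $0<m<1$. Throughout I would work with the excessive regularization $u:=\hat R_1^A\in\W$, which satisfies $u\le 1$, $u=R_1^A$ on $A^c$, and $u=1$ on $A$ up to the (polar) set of irregular points of $A$.

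As a preliminary I would verify that $\inf_X u=m$. On $A^c$ we have $u=R_1^A\ge m$, so only the irregular points $x_0\in A$ need attention; at such a point $A$ is thin, hence $A^c$ is a fine neighbourhood of $x_0$, and fine continuity of the excessive function $u$ gives $u(x_0)=\text{f-}\lim_{y\to x_0,\,y\in A^c}R_1^A(y)\ge m$. Therefore $u\ge m$ everywhere and $\inf_X u=m$.

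The heart of the matter is a rescaling argument. Assume $0<m<1$. Because $1$, and hence every positive constant $m$, is harmonic, one has $\vx^\vc(m)=m$ for every relatively compact open $V$, whereas $u\in\W$ satisfies the super-mean-value inequality $\vx^\vc(u)\le u(x)$. Subtracting, $u-m$ is a nonnegative, lower semicontinuous, superharmonic function, so $u-m\in\W$ and therefore $v:=(u-m)/(1-m)\in\W$. At the regular points of $A$ one has $u=1$, hence $v=1$ there, i.e. $v\ge 1$ quasi-everywhere on $A$; since reductions are insensitive to polar sets, minimality of the regularized reduction yields $v\ge\hat R_1^A=u$. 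Thus $(u-m)/(1-m)\ge u$, and multiplying by $1-m>0$ gives $u-m\ge u-mu$, i.e. $mu\ge m$, so $u\ge 1$ (as $m>0$). Combined with $u\le 1$ this forces $u\equiv 1$, whence $m=1$, contradicting $m<1$. Hence $m\in\{0,1\}$, which is precisely (\ref{0-1}).

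The step I expect to be the main obstacle is the claim $u-m\in\W$: this is exactly where the hypothesis that $1$ is harmonic enters, and it must be justified from the super-mean-value inequality for $u$ together with the harmonic \emph{equality} $\vx^\vc(m)=m$ for the constant, not merely from excessivity of $1$. The remaining delicate points are purely the regularity bookkeeping — that $\inf_X\hat R_1^A=\inf_X R_1^A$, and that $v\ge 1$ quasi-everywhere on $A$ already implies $v\ge\hat R_1^A$ — both of which rest on fine continuity of excessive functions and the insensitivity of reductions to polar sets.
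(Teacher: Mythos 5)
Your overall strategy --- the affine rescaling $w\mapsto (w-m)/(1-m)$, made possible because harmonicity of $1$ allows one to subtract constants inside $\W$ --- is sound, and your subtraction step is essentially correct: if $w\in\W$ and $w\ge m$, then $w-m$ is positive, lower semicontinuous, and satisfies $\int (w-m)\,d\ve_x^{V^c}\le w(x)-m$ for every relatively compact open $V\ni x$ (here $\ve_x^{V^c}(1)=1$ is exactly where harmonicity of $1$ enters), whence $w-m\in\W$ by the same mechanism the paper itself invokes for the Liouville property (\cite[II.5.5, III.3.4]{BH}). The genuine gap is the step ``$v\ge 1$ at the regular points of $A$, hence $v\ge \hat R_1^A$''. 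At the point where Proposition \ref{so-simple} is stated, the only standing hypotheses are condition (C) and harmonicity of $1$ (the Green function assumptions come later), and in that generality the irregular points of $A$ form a \emph{semipolar} set, which need not be polar, while reduced functions are \emph{not} insensitive to semipolar sets. A concrete instance: uniform motion to the right on $X=\real$, whose excessive functions are the positive decreasing right-continuous functions, is a balayage space satisfying (C) in which $1$ is harmonic; for $A=\{0\}$ one has $R_1^A=1_{(-\infty,0]}$ and $\hat R_1^A=1_{(-\infty,0)}$, and \emph{every} point of $A$ is irregular. Thus $v=0\in\W$ satisfies $v\ge 1$ on the set of regular points of $A$ (vacuously), yet $v\not\ge \hat R_1^A$. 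So the inference you use is invalid in the required generality, and the proof as written does not close; note also that your parenthetical claim that the irregular points form a polar set is false in this setting.

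The argument can be repaired while keeping your key idea, by applying the rescaling to arbitrary competitors instead of to the regularization, which removes irregular points from the picture entirely. Assume $0<m<1$ and let $w\in\W$ with $w\ge 1$ on $A$. Then $w\wedge 1\in\W$ (recall that $\W$ is min-stable) is again such a competitor, it equals $1$ \emph{everywhere on $A$} (not merely at regular points), and $w\wedge 1\ge R_1^A\ge m$ pointwise. By the subtraction step, $w':=(w\wedge 1-m)/(1-m)\in\W$ and $w'\ge 1$ on $A$, so $w'\ge R_1^A$, i.e. $w\ge w\wedge 1\ge m+(1-m)R_1^A$. Taking the infimum over all competitors $w$ gives $R_1^A\ge m+(1-m)R_1^A$, hence $m(1-R_1^A)\le 0$ and $R_1^A\equiv 1$, contradicting $m<1$. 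This version needs neither $\hat R_1^A$, nor fine continuity, nor any statement about polar or semipolar sets, and it works in the generality in which the paper (which defers the proof to \cite[Proposition 2.3]{HN-unavoidable}) states the result.
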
 

\begin{corollary}\label{unav-corollary}
If $A \subset X$ is  unavoidable and $B\subset X$,  $\g>0$ such that   $ R_1^B\ge \g$ on $A$, then $B$ is unavoidable.
\end{corollary}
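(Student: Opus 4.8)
The plan is to deduce $R_1^B=1$ from the zero--one law by first proving the uniform lower bound $R_1^B\ge\g$ on all of $X$. Once this bound is available, Proposition~\ref{so-simple} finishes immediately: the alternative $\inf_{x\in X}R_1^B(x)=0$ is ruled out by $\g>0$, so the only remaining possibility is $R_1^B=1$. Thus the entire content of the corollary is concentrated in the single inequality $R_1^B\ge\g$, and the hypothesis that $A$ is unavoidable enters only through this estimate. (Note also that, as $R_1^B\le 1$, we may assume $\g\le 1$, for otherwise $A=\emptyset$ and there is nothing to prove.)

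To produce the bound I would sweep onto $A$. The mechanism is that reduction onto $A$ does not increase functions of $\W$: for every $w\in\W$,
\[
   \int w\,d\vx^A=R_w^A(x)\le w(x),
\]
simply because $w$ is itself admissible in the infimum defining $R_w^A$. I would apply this to $w:=\hat R_1^B\in\W$, which satisfies $\hat R_1^B\le R_1^B$ everywhere and $\hat R_1^B=R_1^B$ on $B^c$. Since $A$ is unavoidable, $\vx^A$ has total mass $\vx^A(X)=R_1^A(x)=1$. Hence, \emph{provided} $\hat R_1^B\ge\g$ holds $\vx^A$-almost everywhere, we get
\[
   R_1^B(x)\ge\hat R_1^B(x)\ge\int \hat R_1^B\,d\vx^A\ge\g\,\vx^A(X)=\g
\]
for all $x$, which is exactly the desired estimate.

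The main obstacle is precisely this almost-everywhere bound. On $A\cap B^c$ it is automatic, since there $\hat R_1^B=R_1^B\ge\g$; the trouble is that $\vx^A$ is carried only by the base of $A$, which may meet $\ov A\sms A$ and, inside $A\cap B$, the irregular points of $B$, where $\hat R_1^B$ can fall below $\g$. I would control this using that $\hat R_1^B\in\W$ is finely continuous, so that $A_\g:=\{\hat R_1^B\ge\g\}$ is finely closed, and that the exceptional set $A\sms A_\g$ is contained in $\{\hat R_1^B<R_1^B\}$, which is semipolar. Reducing $\hat R_1^B\ge\g\,1_{A_\g}$ then yields $\hat R_1^B\ge\g R_1^{A_\g}$, so it suffices to know that $A_\g$ is unavoidable. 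The genuinely delicate point, and the one I expect to require the most care, is therefore the stability statement that removing a semipolar set from an unavoidable set leaves it unavoidable; granting this, $R_1^{A_\g}=1$, hence $\hat R_1^B\ge\g$ everywhere, and Proposition~\ref{so-simple} completes the proof.
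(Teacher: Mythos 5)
Your overall skeleton is the right one: establish the uniform bound $R_1^B\ge\g$ on all of $X$ and then let the zero--one law of Proposition \ref{so-simple} finish the proof (the paper itself defers to \cite[Proposition 2.3]{HN-unavoidable}, where this is exactly the mechanism). The problem is your route to the uniform bound. Its decisive step is, as you say yourself, the claim that removing a semipolar set from an unavoidable set leaves it unavoidable -- and you do not prove this, you only ``grant'' it. That is a genuine gap, not a technicality. Nothing in the paper supplies such a statement: Proposition \ref{simple}(a) allows the removal of finitely many relatively compact sets (or, per the footnote, sets whose reduced function is $\Px$-bounded), and a semipolar set need be neither. In the paper's generality -- Hunt processes with jumps, for which semipolar sets need not be polar and may carry positive capacity -- your claim amounts to the assertion that no semipolar set can be unavoidable, which is at least as deep as the corollary you are trying to prove. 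So the argument is incomplete precisely where all its weight rests.

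The gap is also entirely avoidable, because $\vx^A$, the regularization $\hat R_1^B$, fine closedness and semipolarity are not needed at all. Argue directly from the definition of reduced functions as pointwise infima over $\W$: let $v\in\W$ with $v\ge 1$ on $B$. Then $v\ge R_1^B$ on $X$, since $R_1^B$ is the infimum of a family of which $v$ is a member; hence $v\ge\g$ on $A$ by hypothesis. Since $\W$ is a convex cone, $v/\g\in\W$, and $v/\g\ge 1$ on $A$, so
\[
   v/\g \;\ge\; R_1^A \;=\; 1 \qquad\text{on } X,
\]
because $A$ is unavoidable. Thus $v\ge\g$ on $X$, and taking the infimum over all such $v$ gives $R_1^B\ge\g$ on $X$. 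Now Proposition \ref{so-simple} rules out $\inf_{x\in X}R_1^B(x)=0$, so $R_1^B=1$ and $B$ is unavoidable. This two-line computation replaces everything in your proof after the first paragraph.
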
 

Moreover, we recall the following elementary fact (see \cite[Lemma 2.2]{HN-unavoidable}).
\begin{proposition}\label{simple} Let $A$ be an unavoidable set in $X$ and let $(B_n)$ be a sequence 
of relatively compact sets in $X$.\footnote{It is easily seen that it is sufficient to assume that the the functions 
$R_1^{B_n}$, $n\in\nat$, are $\Px$-bounded.}  Then the following hold.
\begin{itemize}
\item[\rm(a)] For every $n\in\nat$, the set  $A\setminus (B_1\cup B_2\cup\dots\cup B_n) $ is unavoidable.
\item[\rm(b)] If $A\subset \bigcup_{n\in\nat} B_n$, then $\sum_{n\in\nat}   R_1^{B_n}=\infty$. 
\end{itemize} 
\end{proposition}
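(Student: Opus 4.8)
The plan is to prove part~(a) first by a direct path argument and then to deduce part~(b) from part~(a) together with the countable subadditivity of reductions. For~(a), writing $B:=B_1\cup\dots\cup B_n$, the goal is to show $P^x[T_{A\setminus B}<\infty]=1$ for every $x\in X$, which by~(\ref{connection}) is exactly the statement $R_1^{A\setminus B}=1$, i.e.\ that $A\setminus B$ is unavoidable.

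To prove~(a) I would set $K:=\ov B$, which is compact since the $B_i$ are relatively compact, and combine two facts. First, unavoidability of $A$ gives, for each fixed $t>0$ and each $x$, that $P^x[T_A\circ\theta_t<\infty]=E^x[R_1^A(X_t)]=1$, by the Markov property and $R_1^A\equiv 1$; hence a.s.\ $A$ is hit at arbitrarily large times. Second---and this is the crux---under the standing hypotheses the process is transient, so the last exit time $L:=\sup\{t\ge 0\colon X_t\in K\}$ from the relatively compact set $K$ is $P^x$-a.s.\ finite: since $K$ is compact, $R_1^K$ is a potential with $R_1^K=1$ on $K$, and potentials tend to $0$ along almost every path, so a path can meet $K$ only up to a finite time. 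Combining the two, almost surely we may choose an integer $m>L$ with $T_A\circ\theta_m<\infty$, which produces a time $s\ge m>L$ with $X_s\in A$; as $s>L$ forces $X_s\notin K\supseteq B$, we get $X_s\in A\setminus B$, so $T_{A\setminus B}<\infty$. Since $x$ was arbitrary, $A\setminus B$ is unavoidable.

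For part~(b), fix $N\in\nat$ and put $C_N:=B_1\cup\dots\cup B_N$. By part~(a) the set $A\setminus C_N$ is unavoidable. Since $A\subset\bigcup_n B_n$, every point of $A\setminus C_N$ lies in some $B_n$ with $n>N$, that is $A\setminus C_N\subset\bigcup_{n>N}B_n$, whence $1_{A\setminus C_N}\le\sum_{n>N}1_{B_n}$. By the countable subadditivity of reductions,
\[
1=R_1^{A\setminus C_N}\le\sum_{n>N}R_1^{B_n}\qquad\text{on }X.
\]
If the series $\sum_n R_1^{B_n}$ were finite at some point $x_0$, then its tails would satisfy $\sum_{n>N}R_1^{B_n}(x_0)\to 0$ as $N\to\infty$, contradicting the displayed inequality at $x_0$. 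Hence $\sum_n R_1^{B_n}=\infty$.

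I expect the only genuine obstacle to lie in~(a), namely the transience input: the finiteness of the last exit time from the relatively compact set $K$, equivalently that $R_1^K$ is a true potential rather than the harmonic constant~$1$. This is precisely where the hypotheses that $1$ is harmonic and that~(C) provides $u,v\in\W\cap\C(X)$ with $u/v\to 0$ at infinity are used; subadditivity by itself gives no control on the portion $A\cap B$ sitting inside the compact set, so some escape-to-infinity mechanism is indispensable.
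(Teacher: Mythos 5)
Your part (b) is fine: deducing it from (a) via countable subadditivity of $f\mapsto R_f$ is exactly the standard route, and it is the one used in the paper's source for this proposition, \cite[Lemma 2.2]{HN-unavoidable}. The genuine gap is in part (a), and it is your very first step. The Markov property does not give $P^x[T_A\circ\theta_t<\infty]=E^x[R_1^A(X_t)]=1$; it gives $P^x[T_A\circ\theta_t<\infty]=E^x\bigl[R_1^A(X_t);\,t<\zeta\bigr]=P^x[t<\zeta]$, where $\zeta$ is the lifetime. So your claim that $A$ is a.s.\ hit at arbitrarily large times requires the process to be \emph{conservative}, and this does not follow from the hypotheses: harmonicity of $1$ says precisely that $\ve_x^{V^c}(1)=1$ for $x\in V$, $V$ relatively compact open, i.e.\ the process leaves every relatively compact set before dying; it may still die "at infinity'' in finite time. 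Concretely, let $X=\real$ and let $\W$ be the cone of positive decreasing lower semicontinuous functions (uniform motion to the right). Condition (C) holds (take $u(x)=1/(1+e^x)$, $v(x)=1+e^{-x}$) and $1$ is harmonic; but $\W$ is also the excessive cone of a rightward motion whose speed increases so fast that it reaches $+\infty$ in finite time, and for that realization an unavoidable set (e.g.\ $A=\nat$) is a.s.\ \emph{not} hit at arbitrarily large times. Since the Hunt process attached to $(X,\W)$ is not unique while the proposition is purely analytic, a pathwise proof must either work for every realization or justify selecting a conservative one; yours does neither. Your second input, "potentials tend to $0$ along almost every path,'' is also not a quotable triviality here: one must work with $\hat R_1^K$ rather than $R_1^K$ (which need not be excessive, and $\hat R_1^K<1$ can happen on a semipolar part of $K$), identify the a.s.\ limit of the supermartingale as $0$, and connect the paper's analytic definition of potential (via $\inf_U R_p^{X\setminus U}=0$) with large-time path behavior. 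Finally, your use of (\ref{connection}) needs $A$ and the $B_n$ Borel, whereas the statement concerns arbitrary subsets.

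For comparison, the argument the paper relies on is purely analytic and short. Let $B:=B_1\cup\dots\cup B_n$. By (C) and \cite[Proposition 4.2.10]{H-course} there is a strictly positive potential $q_0\in\W\cap\C(X)$; then $q:=q_0/\min_{\ov B}q_0\in\Px$ satisfies $q\ge 1$ on $B$, hence $R_1^B\le q$ (this is the $\Px$-boundedness of the footnote). Subadditivity gives $1=R_1^A\le R_1^{A\setminus B}+R_1^B$, so $R_1^{A\setminus B}\ge 1-q$, and therefore $\hat R_1^{A\setminus B}\ge 1-q$, since $1-q$ is continuous. Now, for every relatively compact open $U\ni x$,
\[
\hat R_1^{A\setminus B}(x)\;\ge\;\ve_x^{X\setminus U}\bigl(\hat R_1^{A\setminus B}\bigr)\;\ge\;\ve_x^{X\setminus U}(1)-\ve_x^{X\setminus U}(q)\;=\;1-R_q^{X\setminus U}(x),
\]
where $\ve_x^{X\setminus U}(1)=1$ is exactly the harmonicity of $1$, and $\inf_U R_q^{X\setminus U}(x)=0$ because $q$ is a potential. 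Letting $U\uparrow X$ gives $\hat R_1^{A\setminus B}=1$. This avoids all path arguments, works for arbitrary (not necessarily Borel) sets, and shows where the standing hypotheses really enter: your instinct that an "escape-to-infinity mechanism is indispensable'' is right, but the correct mechanism supplied by (C) is the existence of a strictly positive continuous potential, not conservativeness plus pathwise decay.
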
 

By definition, a \emph{potential on $X$} is a function $p\in \W$ such that, for every relatively compact open 
set $U$ in $X$, the function $R_p^{X\setminus U} $ is continuous and real on~$U$ 
and 
\[
                       \inf \{ R_p^{X\setminus U}\colon U\mbox{ relatively compact open in }X\}=0.
\]
By \cite[Proposition 4.2.10]{H-course}, a~function $p\in \W\cap \C(X)$ is a potential if and only if
 there exists a strictly positive $q\in \W\cap \C(X)$
such that $p/q$ vanishes at infinity.     Let~$\Px$ denote the set of all continuous    
real potentials on $X$.

In the following let us assume that there is a Green function $G$ for $(X,\W)$
which is related to a metric for the topology of $X$:

\begin{assumption}\label{ass-1}
We have a Borel measurable function $G\colon X\times X\to (0,\infty]$ and  a~metric $\rho$
for $X$ such that the following hold:
\begin{itemize} 
\item      [\rm (i)] 
For every $y\in X$,  $ G(\cdot,y)$ is a potential which is harmonic on $X\setminus  \{y\}$.   
\item [\rm (ii)] 
For every potential $p$ on $X$, there exists a measure $\mu$ on $X$    such that 
\begin{equation}\label{p-rep}
p=G\mu:=\int G(\cdot,y)\,d\mu(y).
\end{equation} 
\item[\rm (iii)] There exist a decreasing function  $g\colon [0,\infty)\to (0,\infty]$ and a constant $c\ge 1$ such that 
\[
c\inv g\!\circ\!\rho\, \le\, G\,\le \, c\, g\!\circ\! \rho.
\]
\end{itemize} 
\end{assumption}

\begin{remarks}{\rm
1. Having (i), each of the following properties implies (ii).
\begin{itemize}
\item
$G$ is lower semicontinuous on $X\times X$, continuous outside the diagonal,
the potential kernel $V_0:=\int_0^\infty P_t\,dt$ of $\mathfrak X$ is 
proper, and there is a measure~$\mu$ on $X$ such that $V_0f=\int G(\cdot, y)f(y)\,d\mu(y)$, $f\in\B^+(X)$
(see \cite{maagli-87}  and \cite[III.6.6]{BH}).
\item
$G$ is locally bounded off the diagonal, each function $G(x,\cdot)$ is lower semicontinuous 
on $X$ and continuous on $X\setminus \{x\}$, and there exists a measure $\nu$ on $X$ such that $G\nu\in \C(X)$ 
and $\nu(U)>0$, for every  finely open  $U\ne \emptyset$ (the latter holds, for example, if 
$V_0(x,\cdot)\ll \nu$, $x\in X$). See \cite[Theorem 4.1]{HN-rep-potential}.
\end{itemize} 

2. For a discussion of (iii) and the later doubling property (\ref{doubling})  see the Appendix.}
\end{remarks}

The measure in (\ref{p-rep}) is uniquely determined and, given any measure~$\mu$
on~$X$ such that $p:=G\mu$ is a potential, the complement of the support of $\mu$ is the largest open
set, where $p$ is harmonic (see, for example, \cite[Proposition 5.2 and Lemma~2.1]{HN-rep-potential}).

Suppose that $A$ is a subset of $X$ such that $\hat R_1^A$ is a potential. Then there is a~unique measure~$\mu_A$ on $X$, 
the \emph{equilibrium measure for $A$}, such that 
\[
               \hat R_1^A=G\mu_A.
\]                             
If $A$ is open, then $\hat R_1^A=R_1^A\in \H^+(X\setminus \ov A)$, and hence   
$\mu_A$ is supported by  $\ov A$. We~observe that, for a general balayage space, this may already fail
if $A$ is compact (see~\cite[V.9.1]{BH}).

We  define inner capacities for open sets $U$ in $X$ by 
\begin{equation}\label{inner-cap}
 \kapi U:=\sup\bigl\{\|\mu\|\colon  \mu\in\M(X), \ \mu(X\setminus U)=0,\ G\mu\le 1\bigr\}
\end{equation} 
and outer capacities for arbitrary sets $A$ in $X$ by
\begin{equation}\label{outer-cap}
 \kapo A:=\inf\bigl\{ \kapi U\colon U\mbox{ open neighborhood of } A\bigr\}.
\end{equation} 
Of course, the function  $U\mapsto \kapi U$, $U$ open in $X$, is increasing. Hence  the function  $A\mapsto \kapo A$,
$A\subset X$, is also increasing  and $\kapo A=\kapi A$, if $A$ is
open. If $\kapi A=\kapo A$, we may simply write $\kap A$ and speak of the capacity of $A$.

The capacity of open sets $U$ is essentially determined by the total mass of  
equilibrium measures for open sets which are relatively compact in $U$:

\begin{lemma}\label{cap-potential}
For every open set $U$ in $X$,
\[
\kap U \ge \sup\{\|\mu_V\|\colon V\mbox{ open and }\ov V \mbox{ compact in } U\}\ge 
c^{-2} \kap U.
\] 
\end{lemma}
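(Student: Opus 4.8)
The plan is to treat the two inequalities separately, the right-hand one being the heart of the matter. Throughout I would write $\tilde G:=g\!\circ\!\rho$, so that Assumption~\ref{ass-1}(iii) reads $c\inv\tilde G\le G\le c\tilde G$, and I would exploit that $\tilde G$ is \emph{symmetric}, $\tilde G(x,y)=\tilde G(y,x)$, because $\rho$ is a metric. This symmetry is the one feature that $G$ itself may lack and that the whole argument turns on.

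First I would dispose of $\kap U\ge\sup\{\|\mu_V\|\colon V\text{ open},\ \ov V\text{ compact in }U\}$. Fix such a $V$. Since $\ov V$ is compact, picking $y_0\in V$ and using $G\ge c\inv\tilde G$ together with $g>0$ bounds $G(\cdot,y_0)$ below by a positive constant on $\ov V$; hence $R_1^V$ is dominated there by a multiple of the potential $G(\cdot,y_0)$, and as $R_1^V\in\W$ by (\ref{red-W}) it is itself a potential. Thus the equilibrium measure $\mu_V$ exists; as recalled in the text it is carried by $\ov V\subset U$ and satisfies $G\mu_V=\hat R_1^V=R_1^V\le 1$. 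Therefore $\mu_V$ is admissible in (\ref{inner-cap}), so $\|\mu_V\|\le\kapi U=\kap U$, and taking the supremum over $V$ gives the estimate.

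For the reverse inequality I would show $\|\mu\|\le c^2\sup_V\|\mu_V\|$ for every $\mu$ admissible in (\ref{inner-cap}), i.e.\ with $\mu(X\sms U)=0$ and $G\mu\le 1$. Given $\ve>0$, inner regularity yields a compact $K\subset U$ with $\mu(K)\ge\|\mu\|-\ve$ (respectively $\mu(K)\ge 1/\ve$ when $\|\mu\|=\infty$), and local compactness an open $V$ with $K\subset V$ and $\ov V$ compact in $U$. Put $\mu':=\mu|_K$, so that $\supp\mu'\subset K\subset V$ and $G\mu'\le G\mu\le 1$. Since $G\mu_V=R_1^V=1$ on $V$, and hence on $\supp\mu'$, one has $\|\mu'\|=\int G\mu_V\,d\mu'$, and the symmetry of $\tilde G$ permits interchanging the roles of the two measures:
\[
\|\mu'\|=\int G\mu_V\,d\mu'\le c\iint\tilde G(x,y)\,d\mu_V(y)\,d\mu'(x)
=c\int\Bigl(\int\tilde G(y,x)\,d\mu'(x)\Bigr)d\mu_V(y)\le c^2\int G\mu'\,d\mu_V\le c^2\|\mu_V\|,
\]
using $G\le c\tilde G$, then Tonelli together with $\tilde G(x,y)=\tilde G(y,x)$, then $\tilde G\le cG$, and finally $G\mu'\le 1$. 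Since $\mu(K)=\|\mu'\|$, letting $\ve\to 0$ and then taking the supremum over admissible $\mu$ gives $\kap U\le c^2\sup_V\|\mu_V\|$.

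The only genuine obstacle is precisely the possible asymmetry of $G$: the classical energy identity $\int G\mu_V\,d\mu'=\int G\mu'\,d\mu_V$ is not available, and this is exactly why the factor $c^{-2}$, rather than $1$, appears. Routing the computation through the symmetric kernel $\tilde G=g\!\circ\!\rho$ and paying one factor $c$ at each replacement $G\leftrightarrow\tilde G$ is what legitimizes the interchange. The remaining ingredients—existence of $\mu_V$, the identity $G\mu_V=1$ on $V$, and the admissibility bookkeeping—are routine given the facts collected before the statement.
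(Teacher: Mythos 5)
Your proof is correct and follows essentially the same route as the paper's: the first inequality by observing that each $\mu_V$ is admissible in the definition of $\kapi U$, and the second by restricting an admissible $\mu$ to a compact $K\subset V\subset\ov V\subset U$, using $G\mu_V=1$ on $V$, and then interchanging the two integrations via Tonelli together with the approximate symmetry $G(x,y)\le c^2G(y,x)$ inherited from the symmetric kernel $g\!\circ\!\rho$, which is exactly where the factor $c^{-2}$ arises in the paper as well. The details you make explicit (potential-hood of $R_1^V$ and hence existence of $\mu_V$, and inner regularity of $\mu$) are left implicit in the paper's proof but are filled in correctly.
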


\begin{proof} The first inequality is trivial.     To prove the second inequality, 
let  $\mu\in \M(X)$ such that $\mu(X\setminus U)=0$ and $G\mu\le 1$, 
and let $K$ be a compact in $U$.  We may choose an open neighborhood  $V$ of $K$ such that $\ov V$ is compact in $U$. Then
\begin{eqnarray*} 
     \|1_K \mu\|&=&\int_K G\mu_V\,d\mu\le\int \int G(x,y)\,d\mu_V(y)\,d\mu(x)\\
       &\le &c^2\int\int G(y,x)\,d\mu(x)\,d\mu_V(y)=c^2\int
       G\mu(y)\,d\mu_V(y)\le c^2 \|\mu_V\|.
\end{eqnarray*} 
\end{proof} 

For $x\in X$ and $0<r<t$, we define balls $B(x,r)$ and shells $S(x,r,t)$ by
\[
                     B(x,r):=\{y\in X\colon \rho(x,y)<r\} , \quad     S(x,r,t):=\{y\in X\colon r\le \rho(x,y)<t\}\,.
\]
 Let us immediately note some elementary properties of  $R_1^{B(x,r)}$ and $\kap B(x,r)$.

\begin{proposition}\label{Rge}
Let $x\in X$, $r>0$ and $B: =B(x,r)$. Then  $R_1^B$ is a potential which is $\Px$-bounded,
\begin{equation}\label{rbg-1}
   R_1^B \le c\,\frac {G(\cdot,x)}{g(r)} \le c^2 \,\frac{g(\rho(\cdot,x))}{g(r)} , \qquad  
\|\mu_B\|\vee \kap B \le c g(r)\inv, 
\end{equation} 
\begin{equation}\label{rbg-2}
 R_1^B \ge  c\inv  \kap B \cdot g(\rho(\cdot,x)+r).
\end{equation} 
\end{proposition}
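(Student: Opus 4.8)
The plan is to establish each assertion in turn, using the Green function representation together with the two-sided estimate $c^{-1}g\!\circ\!\rho\le G\le c\,g\!\circ\!\rho$ from Assumption~\ref{ass-1}(iii). First I would note that $B=B(x,r)$ is relatively compact (balls are assumed relatively compact), so by \cite[VI.2.6]{BH} / (\ref{A-harmonic}) the reduced function $R_1^B$ is harmonic off $\ov B$; being dominated by the potential $R_w^B$ for a suitable continuous $w\in\W$ with $w\ge 1$ near $\ov B$, it is itself a potential, and it is $\Px$-bounded because it is majorized by a continuous potential. In particular $\hat R_1^B=G\mu_B$ for the equilibrium measure $\mu_B$, which is supported on $\ov B$.

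\textbf{Upper bound.} For the first estimate in (\ref{rbg-1}), the key observation is that on the ball $B$ the potential $G(\cdot,x)$ is bounded below: for $y\in B$ we have $\rho(y,x)<r$, so $G(y,x)\ge c^{-1}g(\rho(y,x))\ge c^{-1}g(r)$ since $g$ is decreasing. Hence $c\,g(r)^{-1}G(\cdot,x)\ge 1$ on $B$; as the left-hand side lies in $\W$ (it is a positive multiple of a potential, harmonic off $\{x\}$), the defining infimum for $R_1^B=R_{1_B}$ gives $R_1^B\le c\,g(r)^{-1}G(\cdot,x)$. Applying $G\le c\,g\!\circ\!\rho$ once more yields the second inequality. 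For the mass bound, I would integrate: since $\hat R_1^B=G\mu_B\le 1$ everywhere and $\mu_B$ is carried by $\ov B$, evaluating $G\mu_B$ against a point near the center and using $G(y,x)\ge c^{-1}g(r)$ for $y$ in the support gives $c^{-1}g(r)\,\|\mu_B\|\le G\mu_B(x)\le 1$, whence $\|\mu_B\|\le c\,g(r)^{-1}$; the bound on $\kap B=\kapo B$ follows from Lemma~\ref{cap-potential} (which bounds $\kap U$ by a constant times a supremum of equilibrium masses) together with the same pointwise estimate, or directly from the definition (\ref{inner-cap}) since any admissible $\mu$ with $G\mu\le 1$ supported in $B$ satisfies the same mass bound.

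\textbf{Lower bound.} For (\ref{rbg-2}) I would argue dually using the equilibrium measure. Writing $R_1^B\ge\hat R_1^B=G\mu_B$ and estimating $G$ from below by $c^{-1}g\!\circ\!\rho$, the point is to bound $\rho(z,y)$ uniformly for $z$ an arbitrary point of $X$ and $y\in\supp\mu_B\subset\ov B$. By the triangle inequality $\rho(z,y)\le\rho(z,x)+\rho(x,y)\le\rho(z,x)+r$, and since $g$ is decreasing,
\[
G(z,y)\ge c^{-1}g(\rho(z,y))\ge c^{-1}g(\rho(z,x)+r)
\]
for every $y\in\supp\mu_B$. Integrating against $\mu_B$ gives
\[
R_1^B(z)\ge G\mu_B(z)\ge c^{-1}g(\rho(z,x)+r)\,\|\mu_B\|\ge c^{-1}\kap B\cdot g(\rho(z,x)+r),
\]
where the last step uses $\|\mu_B\|\ge\kap B$ (equivalently, $\mu_B$ is admissible in the capacity supremum, or one invokes Lemma~\ref{cap-potential}). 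Since $z$ is arbitrary this is exactly (\ref{rbg-2}).

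The main obstacle I anticipate is the careful handling of the support of $\mu_B$ and the comparison between $\|\mu_B\|$ and $\kap B$: one must make sure the equilibrium measure is genuinely carried by $\ov B$ (this is where relative compactness of balls and the harmonicity of $R_1^B$ off $\ov B$ enter, via the uniqueness of the representing measure and the statement that $X\setminus\supp\mu$ is the largest open set where $G\mu$ is harmonic) and that the triangle-inequality estimate for $g(\rho(z,x)+r)$ is applied with the monotone $g$ in the correct direction. The factor tracking through the two-sided constant $c$ must be followed honestly so that the stated powers of $c$ in (\ref{rbg-1}) and (\ref{rbg-2}) come out correctly; the doubling property of $g$ is not needed here, only its monotonicity.
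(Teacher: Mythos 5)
Your handling of \eqref{rbg-1} and of the potential/$\Px$-boundedness claims is essentially the paper's own argument: $G(\cdot,x)\ge c\inv g(r)$ on $B$ gives the majorant $c\,G(\cdot,x)/g(r)$, and evaluating $G\mu$ at the center $x$ bounds $\|\mu\|$ for any $\mu$ carried by $\ov B$ with $G\mu\le 1$, which covers both $\|\mu_B\|$ and the capacity. (One small point: for $\Px$-boundedness you need a \emph{continuous real} majorant; $c\,G(\cdot,x)/g(r)$ may be infinite at $x$, so one should take $\min\{1,c\,G(\cdot,x)/g(r)\}$ as the paper does -- this function equals $1$ on $B$ and is continuous off $x$, hence lies in $\Px$.)

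The genuine gap is in your proof of \eqref{rbg-2}, at the step $\|\mu_B\|\ge\kap B$. Neither of your proposed justifications works. First, $\mu_B$ is in general \emph{not} admissible in the supremum \eqref{inner-cap} defining $\kapi B$: it is supported on $\ov B$ and may charge $\partial B$; in the classical Brownian case the equilibrium measure of a ball sits entirely on the bounding sphere, so $\mu_B(B)=0$. Second, Lemma \ref{cap-potential} does not compare $\|\mu_B\|$ with $\kap B$ at all -- it compares $\kap B$ with equilibrium masses of relatively compact open subsets $V$ of $B$, and only up to the factor $c^{2}$. That factor is unavoidable by such duality arguments here, because $G$ is not assumed symmetric: Assumption \ref{ass-1}(iii) gives only the quasi-symmetry $G(x,y)\le c^2G(y,x)$, so the computation $\|\mu\|=\int \hat R_1^B\,d\mu=\int\int G(x,y)\,d\mu_B(y)\,d\mu(x)\le c^2\int G\mu\,d\mu_B\le c^2\|\mu_B\|$ yields only $\|\mu_B\|\ge c^{-2}\kap B$. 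Following your route honestly therefore produces $R_1^B\ge c^{-3}\kap B\cdot g(\rho(\cdot,x)+r)$, not the stated constant $c\inv$.

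The paper's proof avoids the equilibrium measure in the lower bound altogether: for any $\mu\in\M(X)$ with $\mu(X\setminus B)=0$ and $G\mu\le 1$, the minimum principle \cite[III.6.6]{BH} gives $R_1^B\ge G\mu$, since $R_1^B=1\ge G\mu$ on the set $B$ carrying $\mu$; then, for $y\in X$ and $z\in B$, $\rho(y,z)<\rho(y,x)+r$ yields $G\mu(y)\ge c\inv g(\rho(y,x)+r)\,\|\mu\|$, and taking the supremum over all such $\mu$ gives exactly $c\inv\kap B\cdot g(\rho(\cdot,x)+r)$. If you want to keep your structure, replace $\mu_B$ by an arbitrary capacity-admissible measure and invoke the minimum principle; this both closes the gap and restores the constant.
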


\begin{proof} We know that  $R_1^B\in \W$ (see  (\ref{red-W})). 
Moreover,  $G(\cdot,x)$ is a potential and $G(\cdot,x)\ge c\inv g(r)$ on $B$. Hence
  $R_1^B\le  \min\{1,  c  G(\cdot,x)/g(r)\} \in \Px$. In particular, $R_1^B$~is a~potential.
  
 Moreover,  let $\mu\in \M(X)$ such that $\mu(X\setminus \ov B)=0$ and $\int G(\cdot,z)\,d\mu(z)=G\mu\le 1$.
Since  $c\inv g(r)\le G(x, \cdot)$ on $\ov B$,   we see that $c\inv g(r) \|\mu\|\le 1$.

If even $\mu(X\setminus B)=0$, then, by the minimum principle (see \cite[III.6.6]{BH}),  
$R_1^B\ge G\mu$.  Let $y\in X$. For all $z\in B$, $\rho(z,y) < \rho(y,x)+ r$, and 
hence $G(y,z)\ge c\inv g(\rho(y,x)+r))$. Thus $R_1^B(y) \ge G\mu(y)\ge c\inv  g(\rho(y, x)+r) \|\mu\|$.
\end{proof}

\begin{assumption}\label{ass-2}
From now on we assume, in addition, the following.
\begin{itemize} 
\item[\rm (iv)]  \emph{Doubling property}: 
There exist $c_D\ge 1$ and $0\le R_0<\infty$ such that 
\begin{equation}\label{doubling}
     g(r/2)\le c_D g(r) \qquad\mbox{ for every }r>R_0.
\end{equation} 
\item[\rm (v)] All balls $B(x,r)$, $x\in X$, $r>0$, are relatively compact. 
\end{itemize} 
\end{assumption}

\begin{remarks}{\rm
1. We note that Assumptions \ref{ass-1} and \ref{ass-2}  are satisfied by rather general isotropic L\'evy processes 
(often with $R_0=0$; see \cite{grzywny} and \cite{H-fuku} for details). 

2. If (\ref{doubling}) is known to hold  with some $R_0>0$, then we may   replace
$R_0$ by any $R_0'\in (0,R_0)$ (at the expense of taking a larger $c_D'$), since $0<R_0'<r\le R_0$
implies that $g(r/2)\le g(R_0'/2)\le  g(R_0'/2) g(R_0)\inv g(r)$.

3. Since the function  $1$ is harmonic, $X$ cannot be compact, and hence (v) implies
that balls are proper subsets of $X$. 
}
\end{remarks}

Having the doubling property for $g$,  there  is a close relation between  estimates which are reverse to the ones in
(\ref{rbg-1}).

\begin{proposition}\label{equiv}
Let $x\in X$, $r>R_0$, $B:=B(x,r)$,  $C\ge 1$, and $\tilde C:=c^2c_DC$. Then
the following hold.
\begin{itemize}
\item[\rm (a)] 
If   $\kap B \ge C\inv g(r)\inv$, then  $R_1^B\ge \tilde C\inv G(\cdot,x)/g(r)$ on $X\setminus B$.
\item[\rm (b)]
If $R_1^B\ge C\inv G(\cdot,x)/g(r)$ on $X\setminus B$, 
then $\kap B\ge \tilde C\inv g(r)\inv $.
\end{itemize} 
\end{proposition}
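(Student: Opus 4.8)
The plan is to treat (a) as a short chain of elementary estimates built on the lower bound (\ref{rbg-2}), and to reduce (b) to a far-field comparison of potentials followed by an approximation of the open ball from the inside. In both parts the constants $c$, $c_D$ and $C$ are arranged so as to combine into the single factor $\tilde C=c^2c_DC$.

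For (a) I would start from (\ref{rbg-2}), that is $R_1^B\ge c\inv\kap B\cdot g(\rho(\cdot,x)+r)$, and insert the hypothesis $\kap B\ge C\inv g(r)\inv$ to get $R_1^B\ge c\inv C\inv g(r)\inv\,g(\rho(\cdot,x)+r)$. On $X\sms B$ one has $\rho(\cdot,x)\ge r$, hence $\rho(\cdot,x)+r\le 2\rho(\cdot,x)$, and since $g$ is decreasing this yields $g(\rho(\cdot,x)+r)\ge g(2\rho(\cdot,x))$. Because $\rho(\cdot,x)\ge r>R_0$, the doubling property (\ref{doubling}) applies with argument $2\rho(\cdot,x)$ and gives $g(2\rho(\cdot,x))\ge c_D\inv g(\rho(\cdot,x))$, while Assumption \ref{ass-1}(iii) gives $g(\rho(\cdot,x))\ge c\inv G(\cdot,x)$. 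Multiplying these produces $R_1^B\ge c^{-2}c_D\inv C\inv G(\cdot,x)/g(r)=\tilde C\inv G(\cdot,x)/g(r)$ on $X\sms B$, which is (a).

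For (b) I would pass to the equilibrium measure $\mu_B$, so that $R_1^B=G\mu_B$ with $\mu_B$ supported by the compact set $\ov B$, and show that a lower bound for $R_1^B$ \emph{far away} forces $\|\mu_B\|$ to be large. Fix a point $y$ with $\rho(y,x)\ge 2r$; such $y$ exists because $X$ is non-compact whereas balls are relatively compact, so $X$ is unbounded for $\rho$. For $z\in\ov B$ we have $\rho(y,z)\ge\rho(y,x)-r$, hence $G(y,z)\le c\,g(\rho(y,z))\le c\,g(\rho(y,x)-r)$ and therefore $G\mu_B(y)\le c\,g(\rho(y,x)-r)\,\|\mu_B\|$. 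On the other hand the hypothesis together with Assumption \ref{ass-1}(iii) gives $G\mu_B(y)=R_1^B(y)\ge C\inv G(y,x)/g(r)\ge c\inv C\inv g(\rho(y,x))/g(r)$. Since $\rho(y,x)\ge 2r$ we have $\rho(y,x)\le 2(\rho(y,x)-r)$ with $\rho(y,x)-r\ge r>R_0$, so (\ref{doubling}) gives $g(\rho(y,x))\ge c_D\inv g(\rho(y,x)-r)$. Combining the two bounds, the factors $g(\rho(y,x)-r)$ cancel and I obtain $\|\mu_B\|\ge c^{-2}c_D\inv C\inv g(r)\inv=\tilde C\inv g(r)\inv$.

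It remains to pass from $\|\mu_B\|$ to $\kap B$, and this is the step I expect to be the main obstacle: $\mu_B$ is only known to be supported by $\ov B$, so it may charge the sphere $\partial B$ and need not be admissible in the definition (\ref{inner-cap}) of $\kapi B$. I would resolve this by exhausting $B$ from inside. For $0<r'<r$ the closed ball $\ov{B(x,r')}$ is compact and contained in $B$, so Lemma \ref{cap-potential} gives $\|\mu_{B(x,r')}\|\le\kap B$. As $r'\uparrow r$ the open sets $B(x,r')$ increase to $B$, whence, by the standard increasing convergence of reductions, $R_1^{B(x,r')}=G\mu_{B(x,r')}\uparrow G\mu_B=R_1^B$; since all the $\mu_{B(x,r')}$ are supported by the one compact set $\ov B$, this monotone convergence of the potentials forces $\mu_{B(x,r')}\to\mu_B$ vaguely, and testing against a $\psi\in\C(X)$ with compact support and $\psi=1$ on $\ov B$ yields $\|\mu_{B(x,r')}\|\to\|\mu_B\|$. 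Hence $\kap B\ge\|\mu_B\|\ge\tilde C\inv g(r)\inv$, which is (b). The points requiring care are precisely the increasing convergence of the reductions and the attendant vague (and then total-mass) convergence of the equilibrium measures; the rest is the bookkeeping of the constants, which closes with the single factor $\tilde C=c^2c_DC$ in both inequalities.
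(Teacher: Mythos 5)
Your part (a) is correct and is precisely the paper's argument: insert the capacity hypothesis into (\ref{rbg-2}), note that $\rho(\cdot,x)+r\le 2\rho(\cdot,x)$ off $B$, and use the doubling property together with $g\!\circ\!\rho\ge c\inv G$. Part (b) also has the right core idea --- a two-sided comparison of a potential at a single far point $y$ with $\rho(y,x)\ge 2r$ (whose existence you justify correctly), with constants combining to $\tilde C\inv$ --- but the step you yourself flagged as the main obstacle is a genuine gap as you treat it. You route the argument through the equilibrium measure $\mu_B$ of the open ball and then need $\|\mu_{B(x,r')}\|\to\|\mu_B\|$ as $r'\uparrow r$, deduced from vague convergence of $\mu_{B(x,r')}$ to $\mu_B$. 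This is not available here. First, Assumption \ref{ass-1} makes $G$ only Borel measurable; no continuity of $G(z,\cdot)$ on $\ov B$ is assumed, so monotone convergence of the potentials $G\mu_{B(x,r')}\uparrow G\mu_B$ cannot be tested against vague convergence of the measures at all. Second, even granting continuity of $G(z,\cdot)$ off $z$, a vague limit point $\nu$ of the family $\mu_{B(x,r')}$ can only be shown to satisfy $G\nu=G\mu_B$ on $X\sms\ov B$, and this does not identify $\nu$ as $\mu_B$: two measures on $\ov B$ whose potentials agree outside $\ov B$ need not coincide (classically, normalized surface measure on $\partial B$ and the Dirac measure at the center have the same Newtonian potential outside $\ov B$). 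So neither the identification of the limit nor the convergence of the total masses to $\|\mu_B\|$ is justified, and with it the inequality $\kap B\ge\|\mu_B\|$ remains unproved.

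The repair is exactly the paper's proof, and it simplifies yours: never introduce $\mu_B$. Fix one point $z$ with $a:=\rho(x,z)\ge 2r$ and $\ve>0$; by the increasing-union property of reductions that you already invoke (\cite[VI.1.7]{BH}), choose $0<r'<r$ such that $V:=B(x,r')$ satisfies $R_1^V(z)+\ve>R_1^B(z)\ge (cC)\inv g(a)/g(r)$. Now run your far-field upper bound on $\mu_V$ instead of $\mu_B$: since $\rho(z,\cdot)> a-r\ge a/2$ on $B$, one gets
\[
   R_1^V(z)=G\mu_V(z)\le c\,g(a/2)\,\|\mu_V\|\le cc_D\,g(a)\,\|\mu_V\| ,
\]
and $\|\mu_V\|\le\kap B$ holds trivially because $\mu_V$ is supported by $\ov V\subset B$ and $G\mu_V\le 1$, i.e.\ $\mu_V$ is admissible in (\ref{inner-cap}). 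Letting $\ve\to 0$ and cancelling $g(a)$ (which is finite, since $g(a)\le c\,G(z,x)<\infty$) yields $\kap B\ge \tilde C\inv g(r)\inv$. This version takes limits of scalars only, needs no convergence of measures, and uses no regularity of $G$ beyond Assumption \ref{ass-1}.
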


\begin{proof} 
(a) Immediate consequence of (\ref{rbg-2}): it suffices to note
that, for every $y\in   B^c$, $\rho(y,x)+r\le 2 \rho(y,x)$ and hence $g(\rho(y,x)+r)\ge c_D\inv g(\rho(y,x))
\ge (cc_D)\inv G(y,x)$.

(b) Let $z\in X\setminus B(x,2r)$, $a:=\rho(x,z)$, and $\ve>0$. Then there exists 
   $0<r'<r$ such that $V:=B(x,r')$ satisfies $R_1^V(z)+\ve >R_1^B(z)\ge (cC)\inv g(a)/g(r)$. 
Moreover, 
\[
      R_1^V (z) =\int G(z,y)\,d\mu_V (y)\le  c g(a/2)  \|\mu_V\|\le cc_D g(a)\kap B,
\] 
since $a/2\le a-r< \rho(z,\cdot)$ on $B$.
Thus $\tilde C\inv  g(r)\inv \le \kap B$.
\end{proof}

Our main theorems  are the following. 

\begin{theorem}[Liouville property] 
Every function in $\tilde \H^+(X)$ is constant.
\end{theorem}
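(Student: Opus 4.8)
The plan is to exploit the zero-one law (Proposition~\ref{so-simple}) together with the ball estimates of Proposition~\ref{Rge} to show that a positive harmonic function cannot grow, and in fact must be bounded, after which a standard averaging/minimum-principle argument forces it to be constant. Let $h\in\tilde\H^+(X)$ be given, so $h\ge 0$ is harmonic in the generalized sense that $\vx^{\vc}(h)=h(x)$ for every relatively compact open $V$ with $x\in V$.

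\emph{Step 1: reduce to boundedness.} First I would argue that it suffices to prove $h$ is bounded. Indeed, if $h\le M$, then $M-h\in\tilde\H^+(X)$ is again positive harmonic (using that $1$ is harmonic, so constants satisfy the mean-value identity~(\ref{mv})), and $\inf h$ is attained in a limiting sense; a minimum principle then forces $h$ to be constant. More concretely, if $h$ is bounded and nonconstant, pick $x_0$ with $h(x_0)$ close to $\inf h=:m$; for the shifted positive harmonic function $h-m\ge 0$ the value at $x_0$ is arbitrarily small, and I would combine the averaging property over large balls with the zero-one dichotomy to derive a contradiction unless $h\equiv m$.

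\emph{Step 2: control the growth of $h$ via avoidable balls.} The key quantitative input is that every ball $B=B(x,r)$ is \emph{avoidable}: by~(\ref{rbg-1}), $R_1^B\le c\,G(\cdot,x)/g(r)$, and since $G(\cdot,x)$ is a potential it vanishes at infinity, so $\inf_X R_1^B=0<1$. Thus by Proposition~\ref{so-simple} each ball is avoidable, meaning the process escapes every ball to infinity with positive probability from suitable starting points. I would use this escape, encoded analytically, to compare the value $h(x)$ with averages of $h$ over the complement $B^c$ of a large ball via the harmonic measure $\vx^{\vc}$; because $\vx^{\vc}$ is a sub-probability with mass $R_1^{V^c}(x)$ that can be made close to $1$ while the exit distribution spreads to infinity, boundedness of $h$ on the relevant region must propagate.

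\emph{Step 3: assemble the argument.} Concretely, I would fix a reference point $x_0$, and for a large ball $V=B(x_0,r)$ write $h(x_0)=\vx[x_0]^{\vc}(h)$ for the \emph{inner} harmonic measure, noting $R_1^{\vc}(x_0)=1$ since $V^c$ is unavoidable (its reduced function cannot tend to $0$, as $h\equiv 1$-type normalization and the fact that the complement of any relatively compact set is unavoidable by Proposition~\ref{simple}(a) applied to the unavoidable set $X$). Then the mean-value identity expresses $h(x_0)$ as a genuine average of the boundary values of $h$ on $\partial V$, and letting $r\to\infty$ together with the potential-theoretic decay forces all these averages to agree with $\inf h$. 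The main obstacle, I expect, is Step~1's passage from the pointwise mean-value property to an honest minimum principle in this abstract balayage-space setting: one must verify that $\tilde\H^+(X)$ is closed under the operations used and that a positive harmonic function attaining (or approaching) its infimum in the fine topology is forced to be constant, which is where the hypotheses that $1$ is harmonic and that balls are relatively compact (Assumption~\ref{ass-2}(v)) are essential.
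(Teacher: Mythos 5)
Your proposal has a genuine gap, and you flag it yourself at the end: everything hinges on passing from ``$h-\inf h$ is small at one point'' to ``$h-\inf h$ is small everywhere,'' and neither of the tools you invoke supplies this step. The zero-one law (Proposition~\ref{so-simple}) is a dichotomy for the hitting functions $R_1^A$ of \emph{sets}; it says nothing about the values of a harmonic function, and Steps~1 and~3 never specify how it is to be applied. (If you apply it to the level sets $\{h\ge t\}$, you can check that it only shows these sets are avoidable for $t$ strictly between $\inf h$ and $\sup h$, which by itself yields no contradiction.) Likewise, the identity $h(x_0)=\ve_{x_0}^{\vc}(h)$ for large balls $V=B(x_0,r)$ expresses $h(x_0)$ as an average of $h$ over $\vc$ against a probability measure, but ``letting $r\to\infty$'' does not force these averages to collapse to $\inf h$: without a quantitative comparison of the exit measures $\ve_x^{\vc}$ and $\ve_y^{\vc}$ for different starting points, the mean value property alone does not exclude nonconstant bounded harmonic functions (Brownian motion on hyperbolic space satisfies it and has plenty; there the doubling hypothesis on $g$ fails, which shows the quantitative assumptions must enter somewhere). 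Your Step~2 observation that every ball is avoidable is correct but irrelevant, since avoidability of balls gives no control on the oscillation of $h$; and the reduction to bounded $h$ in Step~1 is itself never carried out. A minor further point: $\ve_x^{\vc}$ is carried by all of $\vc$, not by $\partial V$, since the process may jump.

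What is actually needed --- and what the paper proves as Proposition~\ref{har-first} --- is a uniform Harnack inequality $\sup h(B)\le (cc_D)^2\inf h(B)$ for every ball $B=B(x_0,R)$ with $R>R_0$, with constant independent of the ball. This is where Assumptions~\ref{ass-1} and~\ref{ass-2} really enter: one exhausts $1_{X\setminus \ov{B'}}h$ (with $B'=B(x_0,2R)$) by potentials $h_n=R_{\vp_n}=G\mu_n$ with $\mu_n(B')=0$, compares $G(x,z)$ with $G(y,z)$ for $x,y\in B$, $z\notin B'$ via $G\approx g\circ\rho$ and the doubling property of $g$, and uses the mean value property only to identify $\sup_n h_n$ with $h$. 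Once this is available, the conclusion is exactly the two-line argument you were aiming for in Step~1: with $a:=\inf h$ and $h':=h-a$, choose $x_0$ with $h'(x_0)<\ve$, and for any $x\in X$ take $R>R_0\vee\rho(x,x_0)$ to get $h'(x)\le (cc_D)^2\ve$; hence $h'=0$. (One also needs $\tilde\H^+(X)\subset\W$, which the paper obtains from \cite[III.3.4, II.5.5]{BH}.) So the skeleton of your final step is fine, but the quantitative Harnack input that makes it work is missing, and the zero-one law cannot substitute for it.
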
 

\begin{theorem}[Wiener's test]\label{wiener-intro}
Let $A$ be a subset of $ X$, $x_0\in X$, $R>0$,  $\g>1$. Then
$A$ is unavoidable if and only if 
\begin{equation}\label{test}
\sum\nolimits_{n\in\nat} g(\g^n R) \kapo (A\cap S(x_0,\g^n R, \g^{n+1} R))=\infty.
\end{equation}
\end{theorem}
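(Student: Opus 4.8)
\emph{Plan of proof.} Throughout write $S_n:=S(x_0,\g^nR,\g^{n+1}R)$, $A_n:=A\cap S_n$ and $s_n:=g(\g^nR)\kapo(A_n)$, so that (\ref{test}) asks whether $\sum_ns_n$ diverges. Since $S_n\subset B(x_0,\g^{n+1}R)$, each $A_n$ is relatively compact by (v), and (as for any relatively compact set) $\hat R_1^{A_n}=G\mu_n$ for an equilibrium measure $\mu_n$ carried by $\ov{A_n}\subset\ov{S_n}$; put $p_n:=\hat R_1^{A_n}=G\mu_n\le1$. The whole argument rests on one \emph{basic comparison}: from $c\inv g\!\circ\!\rho\le G\le c\,g\!\circ\!\rho$ together with the doubling property (which lets one pass between $g(\g^nR)$ and $g(\g^{n+1}R)$ at a cost depending only on $c_D$ and $\g$) one gets $G(x_0,w)\approx g(\g^nR)$ uniformly for $w\in\ov{S_n}$, whence
\[
   R_1^{A_n}(x_0)=\int G(x_0,w)\,d\mu_n(w)\approx g(\g^nR)\,\|\mu_n\|\approx g(\g^nR)\,\kapo(A_n)=s_n,
\]
the last step being the comparison $\|\mu_n\|\approx\kapo(A_n)$, which follows from capacitability of the Borel set $A_n$ and the $c$-quasisymmetry of $G$, in the spirit of Lemma~\ref{cap-potential}. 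I would prove this comparison as a preliminary lemma; it is what links (\ref{test}) to the sums $\sum_nR_1^{A_n}(x_0)$.

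Granting it, the \emph{necessity} of (\ref{test}) is short. Discarding the relatively compact set $A\cap B(x_0,\g^{n_0+1}R)$ — legitimate by Proposition~\ref{simple}(a), affecting only finitely many terms and allowing us to assume $\g^nR>R_0$ for all remaining $n$ — we may suppose $A\subset\bigcup_nA_n$ with each $A_n$ relatively compact. Proposition~\ref{simple}(b) then gives $\sum_nR_1^{A_n}=\infty$; evaluating at $x_0$ and inserting the basic comparison yields $\sum_ns_n=\infty$.

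For \emph{sufficiency} I would use the zero–one law: by Proposition~\ref{so-simple} it suffices to produce $\ve_0>0$ with $R_1^A\ge\ve_0$ everywhere, for then the alternative $\inf R_1^A=0$ is excluded and $R_1^A\equiv1$. Fix $N$ with $\g^NR>R_0$, and for $K\ge N$ set $E_K:=\bigcup_{n=N}^KA_n$ and $\sigma:=\sum_{n=N}^K\mu_n$, a measure carried (finely) by $E_K$. The engine is the domination bound
\[
   \hat R_1^{E_K}(y)\ \ge\ \frac{G\sigma(y)}{\,\sup_{E_K}G\sigma\,},
\]
valid because the potential $G\sigma$ attains its supremum $M$ on its support (maximum principle, \cite{BH}) and equals its own reduction onto $E_K$, so that $G\sigma=\hat R^{E_K}_{G\sigma}\le M\,\hat R_1^{E_K}$. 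It remains to bound the two sides by the series. For the numerator, for any $y$ and any shell \emph{beyond} $y$, i.e.\ $\g^nR\ge2\rho(x_0,y)$, the basic comparison gives $G(y,w)\approx g(\g^nR)$ for $w\in\ov{S_n}$, hence $p_n(y)\gtrsim s_n$; summing, $G\sigma(y)\gtrsim\sum_{n_y\le n\le K}s_n$, where $n_y$ indexes the first such shell. For the denominator one takes $z\in A_m$ with $N\le m\le K$: the self-term $p_m(z)\le1$ and the two nearest-neighbour terms $p_{m\pm1}(z)\le1$, while for $|n-m|\ge2$ the geometric spacing gives $\rho(z,w)\approx\g^{\max(n,m)}R$ for $w\in\ov{S_n}$, so $p_n(z)\approx g(\g^{\max(n,m)}R)\kapo(A_n)\le s_n$ (for $n\le m-2$ one reinserts the weight via $g(\g^mR)\le g(\g^nR)$); therefore $\sup_{E_K}G\sigma\lesssim\sum_{n=N}^Ks_n+3$. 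Since $\sum_ns_n=\infty$, dividing and letting $K\to\infty$ with $N$ and $y$ fixed makes the ratio of the two finite sums tend to $1$, so $R_1^A(y)\ge R_1^{E_K}(y)\ge\hat R_1^{E_K}(y)\ge\ve_0$ for a constant $\ve_0>0$ depending only on $c,c_D,\g$ and \emph{independent of $y$}. Hence $\inf_yR_1^A(y)\ge\ve_0$ and $A$ is unavoidable.

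The crux — and the only genuinely delicate point — is the denominator estimate: showing that the inter-shell interactions, beyond the three unavoidable terms each $\le1$, are controlled by the \emph{same} series $\sum_ns_n$ that drives the numerator. This quasi-independence of the annular pieces, furnished by the doubling of $g$ and the geometric spacing of the shells, is the heart of the matter; the capacity–mass comparison $\|\mu_n\|\approx\kapo(A_n)$ and the fine-support bookkeeping needed to make the domination bound rigorous are the remaining technical points.
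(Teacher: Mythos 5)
Your overall architecture (shell decomposition, quasi-additivity across shells, zero--one law to upgrade a uniform lower bound on $R_1^A$ to unavoidability) is reasonable, and your necessity half is essentially repairable. But the proof as written rests on two claims that are not available in this setting, and they are exactly the points the paper is engineered to avoid. First, you take for each Borel set $A_n=A\cap S(x_0,\g^nR,\g^{n+1}R)$ an equilibrium measure $\mu_n$ with $\hat R_1^{A_n}=G\mu_n$ \emph{carried by} $\ov{A_n}$. Existence of $\mu_n$ is fine (since $\hat R_1^{A_n}$ is dominated by the potential $R_1^{B(x_0,\g^{n+1}R)}$), but the support statement is false in general: the paper explicitly warns that for a general balayage space the equilibrium measure of a set need not be supported by its closure even when the set is \emph{compact} (see \cite[V.9.1]{BH}); the mechanism is that, for processes with jumps, $\hat R_1^{A_n}$ need not be harmonic off $\ov{A_n}$ because $\hat R_1^{A_n}$ and $R_1^{A_n}$ may differ on the irregular points of $A_n$ and the harmonic-measure integrals see those values. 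Support in $\ov{A_n}$ is guaranteed only for \emph{open} sets. Second, you assert $\|\mu_n\|\approx\kapo(A_n)$ "by capacitability of the Borel set $A_n$". No capacitability theorem is proved or cited in the paper, and indeed the paper defines $\kapi$ only for open sets and $\kapo$ only through open neighborhoods precisely because such a comparison for general Borel sets is not at hand here. Both your numerator bound $G\sigma(y)\gtrsim\sum s_n$, your denominator bound, and the domination step $G\sigma\le (\sup_{E_K}G\sigma)\,\hat R_1^{E_K}$ (which needs $\sigma$ to be carried by $E_K$, or at least control of $G\sigma$ on the actual support of $\sigma$) collapse without these two claims. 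So the sufficiency direction has a genuine gap.

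For contrast: the paper never touches equilibrium measures of non-open sets. For necessity it replaces $A_n$ by open neighborhoods $U_n\subset S(x_0,\g^nR/2,\g^{n+1}R)$ with $\kap U_n\le\kapo A_n+2^{-n}$, applies Proposition \ref{simple}(b) to the $U_n$, and estimates $R_1^{V_n}(x_0)$ for relatively compact open $V_n\subset U_n$ using \cite[VI.1.7]{BH} and the fact that $\mu_{V_n}$ \emph{is} supported by $\ov V_n$ (this is Proposition \ref{only-if}; your necessity argument becomes correct if you route it through such neighborhoods instead of through $\mu_{A_n}$). For sufficiency the paper argues by contraposition: if $A$ is avoidable, the zero--one law (Proposition \ref{so-simple}) gives a point where $R_1^A<(c^2c_D)^{-1}$, \cite[VI.1.5]{BH} gives an avoidable \emph{open} neighborhood $U\supset A$, Theorem \ref{V.4.15} (whose proof splits the representing measure of the potential $R_1^{\bigcup U_n}$ of an open union of shells, where restriction to shell closures is legitimate) yields $\sum_nR_1^{U_n}(x_0)<\infty$, and Lemma \ref{cap-potential} converts this into $\sum_n g(\g^nR)\kap U_n<\infty$, hence convergence of (\ref{test}). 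In short, your direct "build a big equilibrium potential and bound it above and below" strategy is the classical one for symmetric Green functions with Choquet capacitability; in the present generality it would first require proving the support and mass-versus-capacity statements you assumed, and the paper's detour through open sets and the contrapositive is what makes the theorem provable without them.
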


For the next two corollaries we suppose, in addition, 
that we have a measure $\lambda\in \M(X)$  with $\supp(\lambda)=X$  and such that, for some   $ c_0\ge 1$,  
the  normalized restrictions $\lambda_{B(x,r)}:=(\lambda (B(x,r)))\inv 1_{B(x,r)} \lambda$ of~$\lambda$ on~$B(x,r)$,
$x\in X$, $ r>R_0$,  satisfy
\begin{equation}\label{glr}
        G\lambda_{B(x,r)} \le c_0    g(r) 
\end{equation} 
so that, in particular,
\begin{equation}\label{kap-first}
                  \kap B(x,r)\ge c_0\inv g(r)\inv
\end{equation} 
(for many L\'evy processes, the Lebesgue measure will have this property; see \cite {H-fuku}). 

\begin{corollary}\label{balls-appl}
Let $A$ be a~union of pairwise disjoint balls $B(z,r_z)$, $z\in Z$, 
where  $Z\subset X$ is   locally finite  and  $r_z>4R_0$, and let $x_0\in X\setminus Z$ such that 
\[
     \inf\nolimits_{z,z'\in Z, \, z\ne z'} \frac {\lambda (B(z,\rho(z,z')/4))}{\lambda (B(x_0,4\rho(x_0,z)))} \cdot 
         \frac{g(r_z)}{g(\rho(x_0,z))} \, >\, 0.
\]
  Then $A$ is unavoidable if and only if
$\sum\nolimits_{z\in Z} g(\rho(x_0,z))/g(r_z)=\infty$.
\end{corollary}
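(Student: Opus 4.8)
The statement is just an evaluation of Wiener's test, so the plan is to apply Theorem~\ref{wiener-intro} with the given point $x_0$, a fixed radius $R\ge R_0$, and $\g=2$, and then to show that the resulting series
\[
W:=\sum\nolimits_{n\in\nat} g(2^nR)\,\kapo\bigl(A\cap S(x_0,2^nR,2^{n+1}R)\bigr)
\]
diverges if and only if $T:=\sum_{z\in Z} g(\rho(x_0,z))/g(r_z)$ does; in fact I would prove that $W$ and $T$ are comparable up to multiplicative constants. Since $Z$ is locally finite, only finitely many centres lie in $B(x_0,2R)$, and by disjointness at most one ball $B(z,r_z)$ can contain $x_0$. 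All of these balls have finite reach and hence meet only finitely many shells, so they affect only finitely many terms of each of $T$ and $W$ and may be discarded; I then assume $x_0\notin B(z,r_z)$, i.e.\ $\rho(x_0,z)\ge r_z$, for every remaining~$z$.

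Two preliminary comparisons drive everything, both legitimate because $r_z>4R_0$ forces every scale that occurs ($r_z$, $\rho(x_0,z)$, $\rho(z,z')$) to exceed $R_0$, so the doubling property~(\ref{doubling}) applies. First, $\kap B(z,r_z)\approx g(r_z)\inv$: the upper estimate is part of~(\ref{rbg-1}) and the lower estimate is~(\ref{kap-first}). Second, if $2^nR\le \rho(x_0,z)<2^{n+1}R$, then monotonicity of $g$ together with~(\ref{doubling}) gives $g(\rho(x_0,z))\approx g(2^nR)$, the constants depending only on $c_D$. Writing $S_n:=S(x_0,2^nR,2^{n+1}R)$, these two facts reduce the whole problem to comparing $\kapo(A\cap S_n)$ with the sum $\sum g(r_z)\inv$ taken over the centres assigned to shell~$n$.

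For the upper bound $W\lesssim T$ I would use that $\kapo$ is monotone and countably subadditive: as $A\cap S_n$ is covered by the balls $B(z,r_z)$ meeting $S_n$, one obtains $\kapo(A\cap S_n)\le c\sum\{g(r_z)\inv\colon B(z,r_z)\cap S_n\ne\emptyset\}$. Since $\rho(x_0,z)\ge r_z$, the ball $B(z,r_z)$ lies in $B(x_0,2\rho(x_0,z))$ and meets only the shells $S_m$ with $g(2^mR)\approx g(\rho(x_0,z))$ apart from a doubling‑controlled, bounded number of inner shells; summing $g(2^mR)$ against $g(r_z)\inv$ over these shells and using the separation hypothesis to bound the local accumulation of such balls yields $W\le C\,T$.

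For the lower bound $W\gtrsim T$ I would, for each $n$, test the inner capacity with $\mu_n:=\sum_z (c_0\,g(r_z))\inv \lambda_{B(z,r_z)}$, the sum over those $z$ with $\rho(x_0,z)\in[2^nR,2^{n+1}R)$; this measure is carried by $A$, has mass $c_0\inv\sum_z g(r_z)\inv$, and by~(\ref{glr}) its diagonal contribution (from the single ball containing $y$) is at most~$1$. The crux, and the step I expect to be the main obstacle, is the off-diagonal estimate: for $y\in B(z,r_z)$ one must bound $\sum_{z'\ne z}(c_0\,g(r_{z'}))\inv G\lambda_{B(z',r_{z'})}(y)$ by a constant $M$ independent of $n$. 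Using $G(y,w)\le c\,g(\rho(y,w))$, the disjointness bound $\rho(y,z')\ge r_{z'}$, and~(\ref{doubling}), this reduces to controlling $\sum_{z'\ne z} g(\rho(z,z'))/g(r_{z'})$ for the well-separated neighbours together with the boundedly many close neighbours; it is exactly here that the hypothesis enters, since the lower bound it imposes on $\lambda\bigl(B(z,\rho(z,z')/4)\bigr)$ is a $\lambda$-packing condition on the centres that makes this series converge geometrically and uniformly. Granting $G\mu_n\le M$, the definition of $\kapo$ as an infimum of inner capacities gives $\kapo(A\cap S_n)\ge M\inv\|\mu_n\|$ (after redistributing each ball's mass among the boundedly many shells it meets, the overlap absorbed into the constants), and summing against $g(2^nR)$ with the second preliminary comparison yields $W\ge C\inv T$. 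Combining the two bounds with Theorem~\ref{wiener-intro} completes the proof.
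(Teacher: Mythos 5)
Your overall route is the same as the paper's: apply Theorem~\ref{wiener-intro} to a dyadic shell decomposition, bound the shell capacities from above by $\sum_z g(r_z)\inv$, and from below by testing with measures of the form $\sum_z (c_0g(r_z))\inv\lambda_{B(z,r_z)}$, the separation hypothesis controlling the interaction between distinct balls; this is exactly what the paper does via Lemma~\ref{comparison}, Lemma~\ref{shell-1} and Theorem~\ref{main-sep} (that you run the ``only if'' direction through subadditivity of $\kapo$ rather than through Proposition~\ref{simple}(b) and Proposition~\ref{Rge} is immaterial). The genuine gap sits at precisely the point you call the crux: the bound $G\mu_n\le M$ is \emph{granted}, not proved, and the mechanism you offer for it --- that (\ref{separation}) makes $\sum_{z'\ne z}g(\rho(z,z'))/g(r_{z'})$ ``converge geometrically and uniformly'' --- is not correct: the hypothesis imposes no geometric decay, and for fixed $z$ there may be arbitrarily many $z'$ with $\rho(z,z')$ in a single dyadic range. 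What (\ref{separation}) actually provides is a packing bound in the measure $\lambda$. Writing $d_{z'}:=\dist(z',Z\setminus\{z'\})$ (so $d_{z'}\le\rho(z,z')$, since $z\in Z$) and applying the hypothesis to $z'$ and its nearest neighbour,
\[
   g(r_{z'})\inv\ \le\ \frac{\lambda\bigl(B(z',d_{z'}/4)\bigr)}{\ve\,\lambda\bigl(B(x_0,4\rho(x_0,z'))\bigr)\,g(\rho(x_0,z'))}\,;
\]
the balls $B(z',d_{z'}/4)$, $z'$ in the $n$-th band, are \emph{pairwise disjoint} and contained in $B:=B(x_0,2^{n+2}R)$, and on $B(z',d_{z'}/4)$ one has $g(\rho(z,z'))\le c_D\, g(\rho(z,\cdot))\le c\,c_D\, G(z,\cdot)$. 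Hence
\[
 \sum\nolimits_{z'\ne z}\frac{g(\rho(z,z'))}{g(r_{z'})}
 \ \le\ \frac{c\,c_D}{\ve\,\lambda(B)\,g(2^{n+1}R)}\int_B G(z,w)\,d\lambda(w)
 \ \le\ \frac{c\,c_D\,c_0\,g(2^{n+2}R)}{\ve\,g(2^{n+1}R)}\ \le\ \frac{c\,c_D\,c_0}{\ve}\,,
\]
where the last two steps use one application of (\ref{glr}) to the \emph{large} ball $B$ and monotonicity of $g$ --- no geometric series anywhere. This disjointness-plus-(\ref{glr}) argument is exactly the content of the paper's Lemma~\ref{shell-1} (via the comparison Lemma~\ref{comparison}); without it your proof does not close, since (\ref{glr}) applied to the small balls alone (your diagonal estimate) cannot control the off-diagonal sum.

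There are also secondary gaps, all repairable by the reductions the paper performs at the start of Theorem~\ref{main-sep}. (a) Any off-diagonal estimate of the above kind needs the balls separated with a margin (Lemma~\ref{comparison} requires $B(z,r_z)\cap B(z',3r_{z'})=\emptyset$); mere pairwise disjointness does not give this, so one first replaces $r_z$ by $r_z/4$, which, by (\ref{doubling}), affects neither (\ref{separation}) nor the convergence of either series. (b) Your claims that each ball meets ``boundedly many shells'' and has ``boundedly many close neighbours'' need $r_z\le\rho(x_0,z)/2$; your reduction only gives $r_z\le\rho(x_0,z)$, and a ball with $r_z$ close to $\rho(x_0,z)$ meets \emph{every} inner shell $S_m:=S(x_0,2^mR,2^{m+1}R)$, $2^mR\le 2\rho(x_0,z)$. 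The paper's reduction (\ref{rz}) fixes this: replace $r_z$ by $\min\{r_z,\rho(x_0,z)/2\}$ and observe that every term altered by this (old or new) is bounded below by $c_D\inv$ up to fixed powers of $c_D$, so either infinitely many terms are altered and both series diverge, or only finitely many are and nothing changes. (c) Your $\mu_n$ is carried by $A$ but not by $A\cap S_n$, so as it stands it does not test $\kapo(A\cap S_n)$ at all; rather than ``redistributing mass'', the clean fix is the paper's: assign to the shell $S(x_0,8^nR,8^{n+1}R)$ only the centres in the middle band $S(x_0,2\cdot 8^nR,4\cdot 8^nR)$ --- after (b) the whole ball $B(z,r_z)$ then lies inside the shell --- and run the argument three times, with $R$, $2R$ and $4R$, to capture all of $Z$.
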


\begin{definition}\label{reg-loc} 
We shall say that pairwise disjoint balls  $B(z,r_z)$,
$z\in Z$,   are \emph{regularly located} if the following hold:
\begin{itemize} 
\item There exists $\ve>0$ such that $\rho(z,z')\ge \ve$, for all $z,z'\in Z$, $z\ne z'$.  
\item There exists $R>0$ such that every ball of radius $R$ contains a
  point of $Z$.
\item There exists a decreasing function $\phi\colon (0,\infty)\to
  (4R_0,\infty)$   and $C> 1$ 
such that
\begin{equation}\label{radius-phi}
\phi(\rho(x_0,z))<   r_z < C \phi(\rho(x_0,z)),     \qquad z\in Z.
\end{equation} 
\end{itemize} 
 \end{definition}

Under mild additional assumptions on $\lambda$ (see Section \ref{distr}), which are satisfied if $X=\reald$, $\rho$
is the Euclidean metric and $\lambda$ is Lebesgue measure, the following holds. 

\begin{corollary}\label{corollary-sep}
Let $A$ be  a  union  of balls   $B(z,r_z)$, $z\in Z$,   in $X$
which are regularly located. Then $A$  is  unavoidable if and only if 
 $\sum\nolimits_{z\in Z} g(\rho(x_0,z))/ g(r_z) =\infty$.
\end{corollary}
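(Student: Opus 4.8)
The plan is to derive the stated equivalence from Wiener's test (Theorem~\ref{wiener-intro}). First note that $x_0\notin Z$ is implicit, so that $\rho(x_0,z)>0$ and (\ref{radius-phi}) are meaningful; that $Z$ is locally finite (it is $\ve$-separated and balls are relatively compact by Assumption~\ref{ass-2}(v)); and that $r_z>4R_0$, since $\phi$ takes values in $(4R_0,\infty)$. Fix $R>0$, take $\g=2$, write $t_n:=2^nR$ and $S_n:=S(x_0,t_n,t_{n+1})$, and set
\[
 w_n:=g(t_n)\,\kapo\bigl(A\cap S_n\bigr),\qquad s_n:=\sum\nolimits_{z\in Z\cap S_n}g(\rho(x_0,z))/g(r_z).
\]
By Theorem~\ref{wiener-intro}, $A$ is unavoidable if and only if $\sum_n w_n=\infty$. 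Since the shells $S_n$ cover the complement of a ball about $x_0$ and $Z$ is locally finite, $\sum_n s_n$ differs from the series in the assertion by at most finitely many terms; so it suffices to prove that $\sum_n w_n$ and $\sum_n s_n$ diverge together. For $z\in Z\cap S_n$ one has $t_n\le\rho(x_0,z)<2t_n$, so $g(\rho(x_0,z))\approx g(t_n)$ with $n$-independent constants by (\ref{doubling}); hence $s_n\approx g(t_n)\sum_{z\in Z\cap S_n}g(r_z)\inv$.

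The heart of the matter is the two-sided capacity estimate $w_n\approx\min\{s_n,1\}$, equivalently $\kapo(A\cap S_n)\approx\min\bigl\{\sum_{z\in Z\cap S_n}g(r_z)\inv,\ g(t_n)\inv\bigr\}$, with constants independent of $n$. The upper bound is soft: finite subadditivity and monotonicity of $\kapo$, together with (\ref{rbg-1}) and (\ref{doubling}), give both $\kapo(A\cap S_n)\le\sum_{z\in Z\cap S_n}\kap B(z,r_z)\le c\sum_{z}g(r_z)\inv$ (whence $w_n\le c's_n$) and $\kapo(A\cap S_n)\le\kap B(x_0,t_{n+1})\le c''g(t_n)\inv$ (whence $w_n\le c''$). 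For the lower bound I would construct a measure $\mu$ carried by $A\cap S_n$ with $G\mu$ bounded, so that $\kapo(A\cap S_n)\ge\|\mu\|$. When $s_n\lesssim1$, take $\mu=\sum_{z\in Z\cap S_n}g(r_z)\inv\lambda_{B(z,r_z)}$, of mass $\approx s_n/g(t_n)$; here (\ref{glr}) controls the self-interaction $G\lambda_{B(z,r_z)}\le c_0g(r_z)$, while the $\ve$-separation and the $R$-net density of $Z$, with the doubling and homogeneity hypotheses on $\lambda$ from Section~\ref{distr}, keep the cross-interactions $\sum_{z'\ne z}g(r_{z'})\inv g(\rho(z,z'))$ bounded at this (sub)critical scale, giving $w_n\gtrsim s_n$. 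When $s_n\gtrsim1$, first pass to a suitably spread subset $Z_n'\subset Z\cap S_n$ with $\sum_{z\in Z_n'}g(r_z)\inv\approx g(t_n)\inv$ and apply the same construction to $Z_n'$ (now exactly critical), obtaining $\|\mu\|\approx g(t_n)\inv$ and $w_n\gtrsim1$.

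Finally, $\sum_n\min\{s_n,1\}=\infty$ if and only if $\sum_n s_n=\infty$: if $\sum_n s_n<\infty$ then $s_n\to0$ and the two series agree in all but finitely many terms, while if $\sum_n s_n=\infty$ then either $s_n\ge1$ infinitely often, whence $\sum_n\min\{s_n,1\}=\infty$, or eventually $s_n<1$ and the series coincide. Combining this with $w_n\approx\min\{s_n,1\}$ and Theorem~\ref{wiener-intro} shows that $A$ is unavoidable exactly when $\sum_n s_n=\sum_{z\in Z}g(\rho(x_0,z))/g(r_z)=\infty$, as claimed.

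The main obstacle is the lower capacity bound, i.e.\ verifying $G\mu\lesssim1$ for the superposed measures above; the delicate point is bounding the cross-interaction sum $\sum_{z'\ne z}g(r_{z'})\inv g(\rho(z,z'))$ by a constant at the critical scale, and it is precisely here that the $\ve$-separation, the $R$-net property, and the mild doubling/homogeneity hypotheses on $\lambda$ of Section~\ref{distr} are all needed. (In the complementary regime $\sup_n s_n<\infty$ one is always subcritical and may instead verify the separation condition of Corollary~\ref{balls-appl} directly and invoke that corollary, bypassing Wiener's test; the infinitely-many-dense case can alternatively be settled by showing, via (\ref{rbg-2}), that each dense shell forces $R_1^{A\cap S_n}\ge\delta$ on $B(x_0,t_n)$ and then appealing to the zero–one law, Proposition~\ref{so-simple}.)
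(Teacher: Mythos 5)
Your reduction of the corollary to Wiener's test plus a per-shell two-sided estimate $w_n\approx\min\{s_n,1\}$ is a sound and genuinely different skeleton from the paper's, and the soft parts (subadditivity upper bound, the $\min$-series equivalence, bounded overlap of shells) are fine. But the proof has a genuine gap exactly where you flag "the main obstacle": the lower bound $w_n\gtrsim\min\{s_n,1\}$ is asserted, not proven, and the hand-waving there does not survive scrutiny. In the subcritical case, $\ve$-separation, the $R$-net property and doubling of $\lambda$ do \emph{not} by themselves bound the cross-interaction sum $\sum_{z'\ne z}g(r_{z'})\inv g(\rho(z,z'))$; one needs (\ref{ass-for-sep}) to control $\sum_{z'}g(\rho(z,z'))$ by $g(t_n)\lambda(B(x_0,t_n))$, and one needs the observation that $s_n\lesssim 1$ forces \emph{each individual} term $g(t_n)/g(r_z)$ to be $\lesssim 1$ (which is what makes a separation condition of type (\ref{sep}) available shell by shell). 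This is precisely the content of the paper's Lemma \ref{comparison} and Lemma \ref{shell-1}, whose proof does not bound cross-interactions term by term but spreads each equilibrium measure over an enlarged ball and compares with $G\lambda_B$. In the supercritical case, "pass to a suitably spread subset $Z_n'$" is a real difficulty, not a routine step: a greedy subset of critical mass can be spatially concentrated (e.g.\ in a thin sub-shell where $\phi$ is large), and for such a configuration the cross-interactions are \emph{not} bounded; you must either find a single ball with $g(r_z)\inv\gtrsim g(t_n)\inv$ (then that ball alone gives $w_n\gtrsim 1$) or select only among per-point-subcritical balls, and then re-run the Lemma \ref{shell-1} argument. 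None of this is in your text, and it is the entire mathematical content of the statement.

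For comparison, the paper does not prove your per-shell estimate at all. It proves necessity by Corollary \ref{ball-nec}, and for sufficiency it makes a \emph{global} dichotomy on $\limsup_{r\to\infty}\lambda(B(x_0,r))g(r)/g(\phi(r))$: if this is positive, Proposition \ref{MV} shows $A$ is unavoidable outright by a direct hitting estimate ($R_1^A\ge\mathrm{const}>0$ at arbitrary points) combined with the zero-one law, Proposition \ref{so-simple}, bypassing capacities entirely; if it vanishes, condition (\ref{sufficient}) lets one verify the separation property (\ref{separation}) and invoke Corollary \ref{final-corollary}, i.e.\ Theorem \ref{main-sep}, which is where Wiener's test and Lemma \ref{shell-1} enter. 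Your closing parenthetical remarks (verify the separation condition in the sparse regime; use (\ref{rbg-2}) and the zero-one law in the dense regime) are in fact pointers to the paper's actual two-case proof, but you present them as unexplored alternatives rather than carrying either one out. So: right engine, correct outer logic, but the key quasi-additivity estimate that everything hinges on is missing.
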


\section{Liouville property}

\begin{proposition}\label{har-first}
Let $U$ be an open set in $X$ and $B:=B(x_0,R)$,  $x_0\in U$, $R>R_0$,
 such that the closure of $B':=B(x_0,2R)$ is contained in
$U$. Then
\begin{equation}\label{h-global}
   \sup h(B)\le (cc_D)^2 \inf h(B) \qquad\mbox{for every }  h\in \W\cap \tilde \H^+(U).
\end{equation} 
\end{proposition}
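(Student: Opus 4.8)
The plan is to prove the pointwise inequality $h(x)\le (cc_D)^2\,h(y)$ for arbitrary $x,y\in B$ and then pass to $\sup_B$ and $\inf_B$. Fix $x,y\in B=B(x_0,R)$. Since $B'=B(x_0,2R)$ has $\ov{B'}$ compact in $U$ and $x,y\in B\subset B'$, the mean value property (\ref{mv}) defining $\tilde\H^+(U)$ gives
\[
h(x)=\int h\,d\ve_x^{(B')^c}\qquad\mbox{and}\qquad h(y)=\int h\,d\ve_y^{(B')^c},
\]
where the reduced (exit) measures $\ve_x^{(B')^c},\ve_y^{(B')^c}$ are carried by $X\setminus B'$. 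Hence it suffices to establish the measure comparison
\[
\ve_x^{(B')^c}\le (cc_D)^2\,\ve_y^{(B')^c}\qquad\mbox{on } X\setminus B',
\]
because integrating the positive function $h$ against both sides then yields the claim.

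The clean geometric ingredient, which produces the constant, is a Green-function Harnack estimate. For every $z\in X\setminus B'$ we have $\rho(x_0,z)\ge 2R>2R_0$, and the triangle inequality gives $\rho(x,z)>\rho(x_0,z)-R\ge \rho(x_0,z)/2$ together with $\rho(y,z)<R+\rho(x_0,z)<2\rho(x_0,z)$. Since $g$ is decreasing and, by the doubling property (\ref{doubling}) applied at the scales $\rho(x_0,z)$ and $2\rho(x_0,z)$ (both exceeding $R_0$), $g(\rho(x_0,z)/2)\le c_D\,g(\rho(x_0,z))$ and $g(\rho(x_0,z))\le c_D\,g(2\rho(x_0,z))$, Assumption \ref{ass-1}(iii) yields
\[
\frac{G(x,z)}{G(y,z)}\le c^2\,\frac{g(\rho(x,z))}{g(\rho(y,z))}\le c^2\,\frac{g(\rho(x_0,z)/2)}{g(2\rho(x_0,z))}\le (cc_D)^2 .
\]

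I would then link the two facts. Because $G(\cdot,z)$ is harmonic on $X\setminus\{z\}\supset\ov{B'}$ for each $z\notin\ov{B'}$, the same mean value identity applies to it, giving $\int G(\cdot,z)\,d\ve_x^{(B')^c}=G(x,z)$ and the analogous equality for $y$. Combined with the estimate above, the Green potentials of the two exit measures therefore satisfy $G\ve_x^{(B')^c}\le (cc_D)^2\,G\ve_y^{(B')^c}$ on $X\setminus\ov{B'}$. It remains to pass from this comparison of potentials to the comparison of the measures themselves.

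I expect this last passage to be the \emph{main obstacle}: comparability of Green potentials does not in general force comparability of their Riesz measures, so one cannot simply invoke the uniqueness in (\ref{p-rep}). The transfer must instead exploit that $\ve_x^{(B')^c}$ and $\ve_y^{(B')^c}$ are the harmonic (exit) measures of the single ball $B'$, whose mutual Radon--Nikodym density on $X\setminus B'$ is governed by the Green function $G_{B'}$ of $B'$; letting $z$ approach interior points of $X\setminus\ov{B'}$ concentrates the potentials $G(\cdot,z)$ and should recover the density comparison with the same factor $(cc_D)^2$, while the part of the measures on $\partial B'$ requires a separate continuity argument. Establishing this comparison rigorously, rather than the routine Green-function and doubling estimates above, is where the real work lies.
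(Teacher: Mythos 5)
Your reduction of the Harnack inequality to the measure comparison $\ve_x^{(B')^c}\le (cc_D)^2\,\ve_y^{(B')^c}$ on $X\setminus B'$ is exactly where the proof breaks down, and you say so yourself. This is a genuine gap, not a routine verification, for two reasons. First, the measure comparison is strictly stronger than the proposition: it is a uniform bound on the Radon--Nikodym density of two exit measures, i.e.\ a Poisson-kernel/boundary-Harnack type estimate, and nothing of that kind is available from Assumptions \ref{ass-1} and \ref{ass-2}. Second, the transfer mechanism you sketch cannot work as stated: the identity $\int G(\cdot,z)\,d\ve_x^{(B')^c}=G(x,z)$ holds only for $z\notin\ov{B'}$ (for $z\in\partial B'$ the function $G(\cdot,z)$ is not harmonic on a neighborhood of $\ov{B'}$, and one only gets $\int G(\cdot,z)\,d\ve_x^{(B')^c}=R^{(B')^c}_{G(\cdot,z)}(x)\le G(x,z)$), whereas the exit measures may charge $\partial B'$ --- indeed, if $(X,\W)$ is a harmonic space (no jumps) they are carried entirely by $\partial B'$, so ``letting $z$ approach interior points of $X\setminus\ov{B'}$'' recovers nothing at all. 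Your appeal to a Green function $G_{B'}$ of the ball and a jump kernel also goes outside the framework; the ``separate continuity argument'' for the boundary part is precisely the missing proof.

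The paper avoids comparing measures altogether by integrating the kernel estimate $G(x,z)\le (cc_D)^2 G(y,z)$, $x,y\in B$, $z\in X\setminus B'$ (which you prove correctly, essentially as in the paper), in the \emph{second} variable rather than the first: choose $\vp_n\in\K^+(X)$ with $\vp_n\uparrow 1_{X\setminus \ov{B'}}h$; then $h_n:=R_{\vp_n}$ are continuous potentials, $h_n=G\mu_n$ with $\mu_n(B')=0$, so the kernel estimate immediately gives $h_n(x)\le (cc_D)^2 h_n(y)$ for $x,y\in B$. The increasing limit $u:=\sup_n h_n\in\W$ inherits this bound, satisfies $u\le h$ everywhere and $u=h$ on $X\setminus\ov{B'}$, and is then identified with $h$ using harmonicity: for an open neighborhood $V$ of $\ov{B'}$ with $\ov V$ compact in $U$ one has $u\ge R_h^{\vc}$, hence $u(x)\ge \vx^{\vc}(h)=h(x)$ for $x\in V$ by (\ref{mv}). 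In effect the paper sweeps the \emph{function} $h$ onto $X\setminus\ov{B'}$ instead of sweeping the measures $\delta_x,\delta_y$ onto $X\setminus B'$; that one-sided reduction is what replaces the density comparison you could not establish, and it is the step your proposal lacks.
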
 

\begin{proof} We may choose 
  $\vp_n\in\K^+(X)$, $n\in\nat$, such that $\vp_n\uparrow  1_{X\setminus \ov{B'}}h$. 
Then
\[
           h_n:=R_{\vp_n } \in \Px\cap \H^+(X\setminus \supp(\vp_n)), \qquad n\in\nat,
\]
by \cite[III.2.4, III.5.6]{BH}. So there exist   measures~$\mu_n$ on~$X$, $n\in\nat$,  such that 
\[
             h_n=G\mu_n \und \mu_n(B')=0. 
\]
For all  $x,y\in B$, and $z\in X\setminus B'$, 
$\rho(x,z)<R+\rho(x_0,z)\le  
 3(\rho(x_0,z)-R)<3\rho(y,z)
$,
hence, defining $K:=(cc_D)^2$,  $G(x,z)\le c g(\rho(x,z))\le cc_D^2 g(\rho(y,z))\le K G(y,z)$ and 
\begin{equation}\label{hnhn}
        h_n(x)=\int G(x,z)\,d\mu_n(z)\le K\int G(y,z)\,d\mu_n(z)       
                \le K h_n(y)
\end{equation} 
for every $n\in\nat$. The sequence $(h_n)$ is increasing to a function $u\in \W$ which, by~(\ref{hnhn}), 
satisfies  $\sup u(B)\le K\inf u(B)$. Clearly, $u\le h$ on $X$ and $u=h$ on $X\setminus \ov B'$.

Let $V$ be an open neighborhood of $\ov {B'}$ such that
  $\ov V$ is compact in $U$. Then $u\ge R_h^\vc$, since $\vc\subset X\setminus \ov B'$, and,
 for every $x\in V$,  $u(x)\ge \vx^\vc(h)=h(x)$. Thus $u=h$ completing the proof.
\end{proof} 

\begin{corollary}[Liouville property]\label{global-h}
Every function in $\tilde \H^+(X)$ is constant. 
\end{corollary}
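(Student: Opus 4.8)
The plan is to turn the scale-invariant Harnack inequality of Proposition~\ref{har-first} into a global two-sided bound and then to exploit the hypothesis that $1$ is harmonic in order to subtract the infimum.

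First I would check that Proposition~\ref{har-first} applies with $U=X$: for a fixed $x_0\in X$ and any $R>R_0$, the closure of $B(x_0,2R)$ is compact by Assumption~\ref{ass-2}(v), hence contained in $X$, so its hypotheses are met. Provided $h\in\W\cap\tilde\H^+(X)$, this yields
\[
   \sup h(B(x_0,R))\le (cc_D)^2\,\inf h(B(x_0,R))\qquad\text{for every }R>R_0,
\]
with a constant $K:=(cc_D)^2$ independent of $R$. Since $\rho$ metrizes $X$, the balls $B(x_0,R)$ increase to $X$ as $R\to\infty$; letting $R\to\infty$ therefore gives the global estimate $\sup_X h\le K\inf_X h$. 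In particular $m:=\inf_X h\le h(x_0)<\infty$, so $h$ is bounded.

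The second step is the standard subtraction trick. Because $1$ is assumed harmonic, $\vx^\vc(1)=1$ for every admissible $V$, so $u:=h-m$ again satisfies the mean value property: $\vx^\vc(u)=\vx^\vc(h)-m\,\vx^\vc(1)=h-m=u$. As $u\ge 0$ and is Borel, $u\in\tilde\H^+(X)$, and $\inf_X u=0$. Applying the global estimate of the first step to $u$ now gives $\sup_X u\le K\inf_X u=0$, whence $u\equiv0$ and $h\equiv m$ is constant.

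The main obstacle is purely a matter of making Proposition~\ref{har-first} applicable to $h$ and to $u$, i.e.\ of ensuring that both lie in $\W\cap\tilde\H^+(X)$ rather than merely in $\tilde\H^+(X)$. The delicate point is membership in $\W$: a function in $\tilde\H^+(X)$ is only assumed Borel and to satisfy the mean value property, whereas the proof of Proposition~\ref{har-first} uses lower semicontinuity of $h$ (through the approximation $\vp_n\uparrow 1_{X\setminus\ov{B'}}h$ by functions in $\K^+(X)$). I would therefore first record that, under the standing assumptions, every positive harmonic function is excessive, so that $\tilde\H^+(X)\subseteq\W$; this makes both $h$ and the positive harmonic function $u=h-m$ automatically admissible, and the two applications of the Harnack inequality go through. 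Everything else — the passage $R\to\infty$, the exhaustion $B(x_0,R)\uparrow X$, and the linearity of the mean value property used for $u$ — is routine.
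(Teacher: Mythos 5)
Your proposal is correct and takes essentially the same route as the paper: both rest on the inclusion $\tilde \H^+(X)\subset\W$ (which the paper obtains from lower semicontinuity of $H_Vf$ via \cite[III.3.4]{BH} and \cite[II.5.5]{BH}, and which you correctly flag as the one point needing justification), then on Proposition~\ref{har-first} applied to large balls, and finally on subtracting $\inf h$, using that $\vx^\vc(1)=1$ because $1$ is harmonic. Your passage to a global Harnack bound $\sup_X h\le (cc_D)^2\inf_X h$ by letting $R\to\infty$ is just a cosmetic repackaging of the paper's argument, which instead fixes $\ve>0$, picks $x_0$ with $h'(x_0)<\ve$, and applies (\ref{h-global}) on a ball $B(x_0,R)$ with $R>R_0\vee\rho(x,x_0)$.
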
 

\begin{proof} 

For every relatively compact open set $V$ in $X$ and every  $f\in \B^+(X)$, the function  $H_V f$ is lower semicontinuous 
on $V$ (see \cite[III.3.4]{BH}). 
Therefore $\tilde \H^+(X) \subset \W$ (see, for example, \cite[II.5.5]{BH}), and we obtain that  (\ref{h-global}) holds
for all $h\in \tilde \H^+(X)$ and  balls $B(x_0,r)$, $x_0\in X$, $r>R_0$.

Now the claim follows immediately by a well known standard argument.
(Let $h\in\tilde \H^+(X)$ and $a:=\inf h(X)$. Then  $h':=h-a\in \tilde
\H^+(X)$. Given  $\ve>0$, there exists
$x_0\in X$ such that $h'(x_0)<\ve$ and hence, considering $x\in X$ and
$R>R_0\vee \rho(x,x_0)$, we obtain that $h'(x)\le (cc_D)^2 h'(x_0)<
(c c_D)^2\ve$. Thus $h'=0$.)
\end{proof} 

\section{Proof of Wiener's test} 

One direction of  Wiener's test  is an easy consequence of Proposition \ref{simple}(b).    
We only have to use the definition of $\kapo$ and note the simple fact that, for every open set~$V$ 
which is contained in an open ball, the reduced function $R_1^V$ is a potential, by  Proposition  \ref{Rge}. 

\begin{proposition}\label{only-if}   
Let $A$ be an unavoidable set in $X$, $x_0\in X$, $R>0$, $\g>1$. Then
\[
\sum\nolimits_{n\in\nat} g(\g^n R) \kapo (A\cap S(x_0, \g^n R, \g^{n+1} R))=\infty.
\]
\end{proposition}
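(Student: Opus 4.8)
The plan is to reduce the divergence of the capacity series to the divergence of $\sum_n R_1^{A_n}(x_0)$, where $A_n:=A\cap S(x_0,\g^n R,\g^{n+1}R)$, and then to dominate each reduced value by $c\,c_D\,g(\g^n R)\,\kapo(A_n)$. First I would set $B:=B(x_0,\g R)$, which is relatively compact by Assumption \ref{ass-2}(v), so that $A':=A\sms B$ is unavoidable by Proposition \ref{simple}(a). Since the shells $S(x_0,\g^n R,\g^{n+1}R)$, $n\ge 1$, partition $\{y\colon \rho(x_0,y)\ge \g R\}$, we have $A'=\bigcup_{n\ge 1}A_n$, and each $A_n$ is relatively compact, being contained in $B(x_0,\g^{n+1}R)$. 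Proposition \ref{simple}(b) then gives $\sum_n R_1^{A_n}=\infty$; evaluating at $x_0$ is justified by noting that each tail $\bigcup_{n>N}A_n$ is again unavoidable by (a), so $1=R_1^{\bigcup_{n>N}A_n}\le\sum_{n>N}R_1^{A_n}$ everywhere, whence $\sum_n R_1^{A_n}(x_0)=\infty$.

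The heart of the argument is the estimate $R_1^{A_n}(x_0)\le c\,c_D\,g(\g^n R)\,\kapo(A_n)$ for every $n$ with $\g^n R>R_0$. Fixing such an $n$, I would introduce the open, relatively compact set $W_n:=\{y\colon \g^n R/2<\rho(x_0,y)<\g^{n+1}R\}$, which contains $A_n$ and on whose closure $\rho(x_0,\cdot)\ge \g^n R/2$; hence, by Assumption \ref{ass-1}(iii) together with the doubling property (\ref{doubling}), $G(x_0,\cdot)\le c\,g(\g^n R/2)\le c\,c_D\,g(\g^n R)$ there. For any open $U$ with $A_n\subset U\subset W_n$, monotonicity gives $R_1^{A_n}\le R_1^U$, and I would write $R_1^U=\sup_V R_1^V$ over open $V$ with $\ov V$ compact in $U$. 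Each such $R_1^V$ is a continuous potential $G\mu_V$ (Proposition \ref{Rge}, as $V$ lies in a ball), with $\mu_V$ carried by $\ov V\subset\ov{W_n}$, so that $R_1^V(x_0)=\int G(x_0,y)\,d\mu_V(y)\le c\,c_D\,g(\g^n R)\,\|\mu_V\|\le c\,c_D\,g(\g^n R)\,\kap U$ by Lemma \ref{cap-potential}. Taking the supremum over $V$ and then the infimum over such $U$ yields the claimed bound, once one checks that this restricted infimum still equals $\kapo(A_n)$ — which it does, since intersecting any open neighborhood of $A_n$ with $W_n$ only decreases $\kapi$.

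Finally, summing over $n\ge n_0$ with $\g^{n_0}R>R_0$ and discarding the finitely many smaller indices, one obtains $\sum_n c\,c_D\,g(\g^n R)\,\kapo(A_n)\ge\sum_{n\ge n_0}R_1^{A_n}(x_0)=\infty$, hence $\sum_n g(\g^n R)\,\kapo(A_n)=\infty$. The genuine content is the coupling of the zero–one-law consequence in Proposition \ref{simple} with the pointwise capacity estimate; the only delicate auxiliary points are the continuity identity $R_1^U=\sup_V R_1^V$ for open $U$ approximated from inside by relatively compact opens (standard, see \cite{BH}) and the reduction of the capacity infimum to neighborhoods contained in $W_n$. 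I expect the latter bookkeeping — arranging that the approximation from inside is compatible with the definition of $\kapo$ while keeping the Green-function bound uniform on $\ov{W_n}$ — to be the main place where care is required, everything else being routine.
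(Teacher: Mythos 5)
Your proposal is correct and follows essentially the same route as the paper's proof: Proposition~\ref{simple} applied to the relatively compact shell pieces, inner approximation of reduced functions of open sets by compactly contained open sets (\cite[VI.1.7]{BH}), the bound $G(x_0,\cdot)\le c\,c_D\,g(\g^n R)$ outside $B(x_0,\g^n R/2)$ integrated against the equilibrium measures, and the definition of $\kapo$. The only difference is bookkeeping: the paper first fixes near-optimal open neighborhoods $U_n$ of the $A_n$ (with $2^{-n}$ error terms) and applies Proposition~\ref{simple}(b) to those, whereas you apply it directly to the sets $A_n$ and then establish the clean pointwise inequality $R_1^{A_n}(x_0)\le c\,c_D\,g(\g^n R)\,\kapo(A_n)$ by combining the outer and inner approximations -- both versions are valid.
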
 

\begin{proof} 
For   $n\in\nat$, there are open neighborhoods~$U_n$ of 
 $ A_n:=A\cap S(x_0, \g^n R, \g^{n+1} R)$  in~$S(x_0,\g^n R/2, \g^{n+1} R)$ 
such that
\begin{equation}\label{kapiU}
         \kap U_n\le \kapo A_n+2^{-n}.
\end{equation} 
Since $A\setminus B(x_0,\g R) \subset \bigcup_{n\in\nat} U_n$, we know, by Proposition \ref{simple},  that
\begin{equation}\label{runi}
         \sum\nolimits_{n\in\nat} R_1^{U_n} =\infty.
\end{equation} 
By  \cite[VI.1.7]{BH}, there  exist open sets $V_n$ such that $\ov V_n$ is compact in $U_n$ and 
\[
              R_1^{U_n}(x_0)\le R_1^{V_n}(x_0)+2^{-n}, \qquad n\in\nat. 
\]
Then, by (\ref{runi}), 
\[
        \sum\nolimits_{n\in\nat} R_1^{V_n}(x_0)=\infty.
\] 
Let $n_0\in \nat$ such that $\g^{n_0} R/2>R_0$. For  $n\ge n_0$, let $\nu_n:=\mu_{V_n}$, that is, $G\nu_n=R_1^{V_n}$.
Since  $\nu_n$ is supported by the set $\ov V_n$,
which does not intersect $B(x_0, \g^n R/2)$, and 
\[
G(x_0,\cdot)\le c g(\rho(x_0,\cdot))\le c g(\rho(\g^n R/2))\le c c_D g(\g^n R) \on X\setminus B(x_0,\g^n R/2),
\]
we see that $ R_1^{V_n}(x_0)=\int G(x_0,y) d\nu_n(y)\le c c_D g(\g^n R) \|\nu_n\|\le c c_D g(\g^n R) \kap U_n$. Therefore
\[
        \infty=\sum\nolimits_{n\ge n_0} R_1^{V_n}(x_0) \le \sum\nolimits_{n\ge n_0} c c_D g(\g^n R)\kap U_n.
\]
Since $g(\g^nR)\le g(R)<\infty$, for every $n\in\nat$, we finally conclude from (\ref{kapiU}) that 
 $\sum_{n\in\nat} g(\g^n R) \kapo A_n=\infty$.
\end{proof} 

Knowing that positive harmonic functions on $X$ are constant, by Corollary \ref{global-h}, 
a set $A$ in $X$ is avoidable if and only if it is minimally thin at infinity (see \cite[Proposition 2.3]{HN-unavoidable}). 
Therefore it suffices to modify the proofs for \cite[V.4.15 and V.4.17]{BH}  (characterizing, 
in the setting of Riesz potentials, thinness of a set $A$ at a~point). 
In the context of L\'evy processes,  this has already been noted (see,
for example, \cite[Proposition 7.3 and Corollary 7.4]{mimica-vondracek}). 
In our situation, the zero-one law will yield a~straight forward
modification.

\begin{theorem}\label{V.4.15} 
Let    $A\subset X$, $x_0\in X$,   $s_n\in (0,\infty) $ and $\delta\in (0,1) $ with
 $s_n\le \delta s_{n+1}$ for every $n\in\nat$.    
Then the following hold for the sets  $ A_n:= A\cap S(x_0,s_n , s_{n+1}  )${\rm:}
\begin{itemize}
\item[\rm (i)] If $A$ is  unavoidable, then $\sum_{n\in\nat} R_1^{A_n}=\infty$ on $X$.
\item[\rm (ii)] If $A$ is avoidable, then $\sum_{n\in\nat} R_1^{A_n}<\infty$ on $X$.
\end{itemize} 
\end{theorem}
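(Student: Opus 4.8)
The plan is to handle the two parts separately, leaning on the zero-one law (Proposition~\ref{so-simple}) and the relative compactness of balls, and---for part~(ii)---on the Liouville property (Corollary~\ref{global-h}). First I record that, since $s_n\le\delta s_{n+1}$ with $\delta<1$, one has $s_n\uparrow\infty$, so the shells $S(x_0,s_n,s_{n+1})$ tile $\{y\colon\rho(x_0,y)\ge s_1\}$ and, for every $N$, $\bigcup_{n\ge N}A_n=A\setminus B(x_0,s_N)$.

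For (i), assume $A$ unavoidable. Each $B(x_0,s_N)$ is relatively compact by Assumption~\ref{ass-2}(v), so Proposition~\ref{simple}(a) shows that $A\setminus B(x_0,s_N)=\bigcup_{n\ge N}A_n$ is again unavoidable, i.e.\ $R_1^{\bigcup_{n\ge N}A_n}\equiv1$ on $X$. By subadditivity of reductions, $1=R_1^{\bigcup_{n\ge N}A_n}\le\sum_{n\ge N}R_1^{A_n}$ pointwise, for every $N$; were the series finite at some point, its tails would vanish, a contradiction. Hence $\sum_n R_1^{A_n}=\infty$ on $X$.

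For (ii), assume $A$ avoidable and put $A':=A\setminus B(x_0,s_1)=\bigcup_n A_n$, which is again avoidable, so $\inf R_1^{A'}=0$ by Proposition~\ref{so-simple}. Since $R_1^{A'}\le1$, $R_1^{A'}=1$ on $A'$, and $R_1^{A'}\in\H^+(X\setminus\ov{A'})$ by~(\ref{A-harmonic}), its greatest harmonic minorant lies in $\tilde\H^+(X)$ and is constant by Corollary~\ref{global-h}; being a nonnegative minorant of a function with infimum $0$, it vanishes, so $R_1^{A'}$ is a potential $G\mu$ with $\mu$ carried by $\ov{A'}$ and $G\mu\le1$. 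Splitting $\mu=\sum_k\mu_k$ along the (disjoint) shells and noting $G\mu=1$ on each $A_n\subset A'$, I would use $R_1^{A_n}=R_{G\mu}^{A_n}\le\sum_k R_{G\mu_k}^{A_n}$ and aim at the comparison
\[
\sum_n R_1^{A_n}\le C\,G\mu\quad\text{on }X,\qquad C=C(\delta,c,c_D);
\]
as $G\mu\le1$, this gives the desired finiteness everywhere.

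The crux is this comparison, which carries the real content of Wiener's criterion. I would fix $N_0$ and split each inner sum over $k$ into a near band $|k-n|<N_0$ and a far band $|k-n|\ge N_0$. For the near band I use only $R_{G\mu_k}^{A_n}\le G\mu_k$, so these terms contribute at most $2N_0\,G\mu$. For the far band I use $R_{G\mu_k}^{A_n}\le(\sup_{A_n}G\mu_k)\,R_1^{A_n}=:M_{k,n}R_1^{A_n}$; here the geometric separation $\rho(y,z)\ge(1-\delta)s_{n\vee k}$ for $y\in A_n$, $z$ in shell $k$, converted through the doubling property~(\ref{doubling}), is expected to give $\sum_{|k-n|\ge N_0}M_{k,n}\le\theta<1$ uniformly in $n$ once $N_0$ is large. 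Working with the finite partial sums $\sum_{n\le N}R_1^{A_n}$ (each term is $\le1$), the far band is then absorbed, yielding $(1-\theta)\sum_{n\le N}R_1^{A_n}\le2N_0\,G\mu$, and $N\to\infty$ gives the comparison. The main obstacle is exactly this uniform far-band estimate: the crude bound $\sup_{A_n}G\mu_k\le c\,g((1-\delta)s_{n\vee k})\|\mu_k\|$ is too lossy (summing the inner shells $k<n$ would cost the full mass $\|\mu\|$, which need not be finite), so one must control $\sup_{A_n}G\mu_k$ by a Harnack-type passage on the shell in the spirit of Proposition~\ref{har-first} and turn the distance ratios between well-separated shells into a genuinely summable series via~(\ref{doubling}); this is precisely the step mirroring the Riesz-potential computations of \cite[V.4.15, V.4.17]{BH}. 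The role of the zero-one law is to replace the local fine-topology bookkeeping of those references by the clean global dichotomy ``potential or $\equiv1$'' used above.
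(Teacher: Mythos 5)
Part (i) of your proposal is fine and is essentially the paper's argument: it is exactly Proposition~\ref{simple}(b), proved by your tail/subadditivity reasoning. In part (ii), however, there are two genuine gaps. The smaller one: you claim $R_1^{A'}=G\mu$ with $\mu$ carried by $\ov{A'}$. For a non-open set this is unjustified: only the regularization $\hat R_1^{A'}$ belongs to $\W$, and while (\ref{A-harmonic}) makes $R_1^{A'}$ harmonic off $\ov{A'}$, the representing measure of the potential $\hat R_1^{A'}$ need not be supported by $\ov{A'}$ --- the paper warns explicitly, right after introducing equilibrium measures, that this can fail already for compact sets in a general balayage space (see \cite[V.9.1]{BH}). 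This gap is repairable by passing to an avoidable \emph{open} neighborhood $V$ of $A$ (\cite[VI.1.2]{BH}), which is exactly what the paper does.

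The serious gap is the one you flag yourself: the far-band estimate $\sum_{|k-n|\ge N_0}M_{k,n}\le\theta<1$. This cannot be obtained by your proposed mechanism, because the only quantitative information on $g$ is the doubling property (\ref{doubling}), which gives \emph{bounded} ratios of $g$ between well-separated shells, never summably small ones (consider $g(r)=1/\log(e+r)$, which is doubling); moreover the masses $\|\mu_k\|$ are not individually controlled, only $\sum_k G\mu_k\le 1$ pointwise. Distance ratios plus doubling can at best give $\sum_{|k-n|\ge N_0}\sup_{A_n}G\mu_k\le c^2c_D\,G\mu(x_*)$ for an anchor point $x_*$, and with $x_*=x_0$ there is no reason this is $<1$. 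The paper's proof supplies precisely the missing idea, and it is a \emph{quantitative} use of the zero-one law, not the qualitative one you make: since $A$ is avoidable, Proposition~\ref{so-simple} yields a point $x_1$ with $R_1^A(x_1)<(c^2c_D)\inv$, hence an open $V\supset A$ with $a:=c^2c_DR_1^V(x_1)<1$. One then re-centers the shells at $x_1$, thickens them to $V_n:=V\cap S(x_1,s_n,s_{n+3})$, and keeps only every $k$-th one, $k$ chosen with $1-\delta^{k-3}>1/2$, so that $y$ in one selected shell and $z$ in the closure of any other satisfy $\rho(y,z)\ge\rho(x_1,z)/2$. The entire off-diagonal potential is then handled in one stroke, with no series to sum: $G\mu_n'(y)\le c\int g(\rho(y,z))\,d\mu_n'(z)\le cc_D\int g(\rho(x_1,z))\,d\mu_n'(z)\le c^2c_D\,G\mu_n'(x_1)\le a$. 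Absorption then reads $G\mu_n\ge 1-a$ on $U_n$, hence $R_1^{U_n}\le(1-a)\inv G\mu_n$ and $\sum_n R_1^{U_n}\le(1-a)\inv$. Your ``Harnack-type passage'' gestures toward this, but without anchoring the far field at the point $x_1$ produced by the zero-one law (and adjusting the shell geometry to $x_1$), the absorption constant $\theta<1$ is simply not available.
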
 

\begin{proof} 
(i) Proposition  \ref{simple}, (b).

(ii) By Proposition \ref{so-simple}, there exists a point $x_1\in X$ such that 
$                                                     R_1^A(x_1)< (c^2 c_D)\inv$.
By \cite[VI.1.2]{BH}, there exists an open neighborhood $V$ of $A$ such that 
\begin{equation}\label{def-a}
 a:=c^2c_D R_1^{V}(x_1)<1.
\end{equation} 
The Liouville property implies that $R_1^V$ is a potential (see  \cite[Proposition 2.3]{HN-unavoidable}).
Let
\[
          V_n:=V\cap S(x_1,s_n,s_{n+3}),  \qquad n\in\nat.
\]
Since $A_n\subset V_{n-1}$ if $n$ is sufficiently large, it suffices
to show that 
\begin{equation}\label{first-claim} 
\sum\nolimits_{n\in\nat} R_1^{V_n}<\infty.
\end{equation} 
To prove (\ref{first-claim})  let 
$k\in\nat$ such that $1-\delta^{k-3}>1/2$. 
We fix $1\le i\le k$ and define 
\[ 
         U_n:=V_{i+nk}, \qquad n\in\nat.
\]
It clearly suffices  to show that  
$\sum_{n\in\nat} R_1^{U_n}<\infty$. Let
$U:=\bigcup\nolimits_{n\in\nat}  U_n$. Since $R_1^U\le R_1^V$, $R_1^U$
is a potential as well, $R_1^U=G\mu_U$. For $n\in\nat$, let
\[
  \mu_n:=1_{\ov U_n} \mu_U
\]
 so that $\sum_{n\in\nat} G\mu_n=G\mu_U=R_1^U\le 1$.

Let $n_0\in\nat$ such that $s_{n_0}>R_0$. For the moment, let us fix
  $n\ge n_0$, consider $m\in\nat$, $m\ne n$, $y\in U_n$, and $z\in \ov U_m$.
If~$m<n$, then $\rho(y_1,z)\le \delta^{k-3}\rho(x_1,y)$. If $m>n$, then $\rho(x_1,y)\le \delta^{k-3} \rho(x_1,z)$.
In~both cases, 
\[
\rho(y,z)\ge (1-\delta^{k-3})\rho(x_1,z)\ge \rho(x_1,z)/2.
\] 
 Defining $\mu_n':=\mu_U-\mu_n=\sum_{m\ne n} \mu_m$ we hence obtain that, for every $y\in U_n$, 
\[
   G\mu_n'(y)\le c \int g(\rho(y,z))\,d\mu_n'(z)\le c c_D\int g(\rho(x_1,z))\,d\mu_n'(z)
\le c^2 c_D G\mu_n'(x_1) \le a. 
\]
Since $G\mu_n+G\mu_n'=G\mu_U=R_1^U$ and $R_1^U=1$ on $U$, we therefore conclude that 
$1-a\le G\mu_n$ on $U_n$, and hence
\[
(1-a)R_1^{U_n}\le G\mu_n.
\] 
Thus $\sum_{n\ge n_0} R_1^{U_n}\le (1-a)\inv \sum_{n\ge n_0}G\mu_n\le
(1-a)\inv G\mu_U\le (1-a)\inv$ completing the proof.
\end{proof}

\begin{proof}[Proof of Theorem \ref{wiener-intro}]
Let $A\subset X$, $x_0\in X$, $R>0$ and $\g >1$.

If $A$ is unavoidable, then (\ref{test}) holds, by Proposition \ref{only-if}.   

So let us assume that $A$ is avoidable. 
By \cite[VI.1.5]{BH}, there exists an open neighborhood $U$ of $A$ which is avoidable. 
For every $n\in\nat$, 
\[
     U_n:=U\cap S(x_0,  \g^{n-1} R, \g^{n+1}R)
\]
is an open neighborhood of $A\cap S(x_0,\g^n R, \g^{n+1} R)$. 
By Proposition \ref{V.4.15}, 
\[
          \sum\nolimits _{n\in\nat} R_1^{U_n} (x_0)<\infty.
\]
By Lemma \ref{cap-potential},  there exist   open sets $V_n$ in $U_n$ such that $\ov V_n$ is compact in $U_n$ and 
\[
             \kap U_n\le c^2 \|\mu_{V_n}\|+2^{-n},\qquad n\in\nat.
\]
Let $k\in\nat$ such that $\g\le 2^k$, and let $n\in\nat$ such that $\g^n R>R_0$. Then
$g(\g^n R)\le c_D^k g(\g^{n+1} R)\le c_D^k g(\rho(x_0,\cdot))$ on $\ov V_n$, and hence 
\[
g(\g^n R) \|\mu_{ V_n}\|\le c_D^k \int g(\rho(x_0,y))\, d\mu_{V_n}(y)\le c c_D^k G\mu_{V_n} (x_0). 
\]
Since $G\mu_{V_n}=R_1^{V_n} \le R_1^{U_n}$ and  $g(\g^n R)\le g(R)$, $n\in\nat$, we conclude that 
\[
      \sum\nolimits_{n\in\nat} g(\g^n R) \kap U_n <\infty.
\]
Thus $\sum\nolimits_{n\in\nat} g(\g^n R) \kapo (A\cap S(x_0,\g^n R, \g^{n+1} R))<\infty$.
\end{proof}

\section{Application to collections of balls having the separation property}\label{appl-ball}

\begin{corollary}\label{ball-nec} 
Let $B(z,r_z)$, $z\in Z\subset X$, $r_z>0$, be balls in $X$ such that their union $A$ is 
unavoidable. Then, for every $x_0\in X$,
\[
   \sum\nolimits_{z\in Z} g(\rho(x_0,z))/g(r_z)=\infty.
\]
\end{corollary}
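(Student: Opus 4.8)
The plan is to derive the necessary condition directly from Wiener's test (Theorem \ref{wiener-intro}), which we may now assume, by bounding the outer capacity of each shell-restricted piece from below by the contribution of the individual balls it contains. The strategy is to fix a base point $x_0\in X$, an annular scaling ratio (say $\g=2$), and some $R>0$, and to show that if $\sum_{z\in Z} g(\rho(x_0,z))/g(r_z)<\infty$, then the Wiener sum $\sum_n g(2^n R)\kapo(A\cap S(x_0,2^nR,2^{n+1}R))$ also converges, contradicting unavoidability. Thus I would prove the contrapositive.

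\medskip

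First I would organize the balls by the shell in which their centers lie: for each $n$, let $Z_n:=\{z\in Z\colon 2^nR\le \rho(x_0,z)<2^{n+1}R\}$. The key lower bound comes from Proposition \ref{Rge}: each ball $B(z,r_z)$ satisfies $\|\mu_{B(z,r_z)}\|\ge c\inv g(r_z)\inv$ is \emph{not} what we want --- rather, I want an \emph{upper} estimate for capacity to feed into Wiener. So instead I would bound $\kapo(A\cap S(x_0,2^nR,2^{n+1}R))$ from above by the sum of capacities of the individual balls meeting that shell, using the obvious subadditivity-type inequality $\kapi(\bigcup_j U_j)\le\sum_j\kapi U_j$ (immediate from the definition (\ref{inner-cap}), since an admissible measure splits as a sum of restrictions). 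Then from (\ref{rbg-1}) each such ball contributes at most $\kap B(z,r_z)\le c\,g(r_z)\inv$. The remaining work is to show $g(2^nR)\cdot g(r_z)\inv$ is comparable to $g(\rho(x_0,z))/g(r_z)$ when $z$ lies in or near the $n$-th shell, which follows because $\rho(x_0,z)\approx 2^nR$ there and the doubling property (\ref{doubling}) makes $g(2^nR)\approx g(\rho(x_0,z))$ up to a constant depending only on $c_D$.

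\medskip

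Summing over $n$ then gives, up to the uniform constant,
\[
\sum_{n} g(2^nR)\,\kapo(A\cap S(x_0,2^nR,2^{n+1}R))\ \lesssim\ \sum_{n}\ \sum_{z\in Z_n} \frac{g(\rho(x_0,z))}{g(r_z)}\ =\ \sum_{z\in Z}\frac{g(\rho(x_0,z))}{g(r_z)},
\]
where the double sum collapses to a single sum over $Z$ since the shells partition $Z$ by center. Hence convergence of $\sum_{z\in Z} g(\rho(x_0,z))/g(r_z)$ forces convergence of the Wiener sum, so by Theorem \ref{wiener-intro} the set $A$ would be avoidable, contradiction. The base-point independence in the statement is harmless, since moving $x_0$ changes each $g(\rho(x_0,z))$ only by a doubling-controlled factor once $\rho$ is large, and the finitely many near terms are irrelevant to convergence.

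\medskip

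The main obstacle I anticipate is controlling balls whose center lies in shell $n$ but which are large enough to overlap several shells, so that $A\cap S(x_0,2^nR,2^{n+1}R)$ is not cleanly a union of whole balls; here one must either intersect each ball with the shell (and bound the capacity of the intersection by that of the full ball, which is fine) or argue that a ball with $r_z\gtrsim\rho(x_0,z)$ already makes its own term $g(\rho(x_0,z))/g(r_z)$ bounded below, so only finitely many such large balls can occur if the series converges. A secondary subtlety is that the hypothesis imposes no separation on the balls, so overlaps could in principle inflate capacities; but since we only need an \emph{upper} bound on capacity and capacity is subadditive, overlaps only help, and no separation condition is needed for this direction --- which is precisely why the corollary holds for arbitrary families $B(z,r_z)$.
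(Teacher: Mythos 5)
Your argument is essentially correct, but it takes a genuinely different and much heavier route than the paper, whose proof consists of two citations: by Proposition \ref{Rge}, each $R_1^{B(z,r_z)}$ is a $\Px$-bounded potential (exactly the hypothesis needed, per the footnote, in Proposition \ref{simple}), so, since the balls cover the unavoidable set $A$, Proposition \ref{simple}(b) gives $\sum_{z\in Z}R_1^{B(z,r_z)}=\infty$ on $X$, while (\ref{rbg-1}) gives $R_1^{B(z,r_z)}(x_0)\le c^2\,g(\rho(x_0,z))/g(r_z)$; divergence of the stated series follows at once, for every $x_0$ (and trivially if $Z$ is uncountable). Note that this uses neither Wiener's test, nor capacities, nor even the doubling property. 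Your route---the contrapositive through Theorem \ref{wiener-intro}, countable subadditivity and monotonicity of $\kapo$, the bound $\kap B(z,r_z)\le c\,g(r_z)\inv$ from (\ref{rbg-1}), and doubling to compare $g(\g^nR)$ with $g(\rho(x_0,z))$---is legitimate (no circularity: the corollary comes after Wiener's test in the paper's logical order), and the two obstacles you flag are the real ones; your fix for balls straddling shells is right: if $r_z\ge\rho(x_0,z)/2$ then doubling forces the term $g(\rho(x_0,z))/g(r_z)\ge c_D\inv$, so a convergent series permits only finitely many such balls, each meeting finitely many shells. To be complete you would still need to write out the subadditivity of $\kapo$ (disjointify an admissible measure to get it for $\kapi$ on countable unions of open sets, then pass to $\kapo$ via an $\ve 2^{-n}$ choice of neighborhoods) and to mind the threshold $R_0$ in (\ref{doubling}): your doubling comparisons hold only at scales above $R_0$, so balls centered within distance about $2R_0$ of $x_0$ require a separate (easy) remark, e.g.\ choosing $R>2R_0$ and noting that convergence of the series forces all but finitely many of them to lie inside $B(x_0,2R)$, where they meet no shell. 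The trade-off: your proof shows that this half of the corollary is a purely formal consequence of Wiener's test plus elementary capacity estimates, which is conceptually tidy, but it costs a page of case analysis where the paper spends two lines and needs weaker hypotheses.
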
 

\begin{proof} Propositions \ref{simple} and   \ref{Rge}.
\end{proof}

The next simple result on  comparison of potentials (cf.\ \cite{H-fuku})  will be sufficient for us
(see the proof of \cite[Theorem 5.3]{HN-unavoidable} for a~much more
delicate version; cf.\ also the proof of \cite[Theorem 3]{aikawa-borichev}).

\begin{lemma}\label{comparison}
Let $Z\subset \reald$ be finite and $r_z>R_0$, $z\in Z$,  such that, for $z\ne z'$,
 $B(z,r_z)\cap B(z',3r_{z'})=\emptyset$.  Let $w\in  \W\cap \C(X)$ and, for every $z\in Z$, let 
$\mu_z,\nu_z$ be measures on~$ B(z,r_z)$ such that  $G\mu_z\le w$, and $\|\mu_z\|\le \|\nu_z\|$.
Then  $\mu:=\sum_{z\in Z} \mu_z$ and $\nu:=\sum_{z\in Z}\nu_z$ satisfy 
 \begin{equation}\label{comp-mu-nu}
             G\mu\le  w+  c^2c_D G\nu.
 \end{equation} 
\end{lemma}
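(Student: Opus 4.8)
The plan is to pass through the domination (minimum) principle \cite[III.6.6]{BH}, reducing the global inequality \eqref{comp-mu-nu} to a pointwise estimate on $\supp\mu$, where the separation hypothesis enforces a clean geometry. First note that the right-hand side of \eqref{comp-mu-nu} lies in $\W$: we have $w\in\W\cap\C(X)$ by assumption, and $G\nu$ is the potential of the finite, compactly supported measure $\nu$ (balls are relatively compact by~(v)), so $G\nu\in\W$; likewise $G\mu=\sum_{z\in Z}G\mu_z$ is a finite sum of potentials, hence a potential. By the minimum principle it therefore suffices to verify
\[
        G\mu\le w+c^2c_D\,G\nu \qquad\text{on } \supp\mu\subseteq\bigcup_{z\in Z}\ov{B(z,r_z)}.
\]

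The key geometric observation is that on $\supp\mu$ exactly one ball is ``near''. Fix $z_0\in Z$ and $x\in\ov{B(z_0,r_{z_0})}$. For every $z\ne z_0$ the separation hypothesis says $B(z_0,r_{z_0})\subseteq\{\rho(\cdot,z)\ge 3r_z\}$, and since the latter set is closed we get $\ov{B(z_0,r_{z_0})}\subseteq\{\rho(\cdot,z)\ge 3r_z\}$, i.e.\ $\rho(x,z)\ge 3r_z$ for all $z\ne z_0$. I would then split $G\mu(x)=G\mu_{z_0}(x)+\sum_{z\ne z_0}G\mu_z(x)$ and bound the near term directly by the hypothesis, $G\mu_{z_0}(x)\le w(x)$.

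For the far terms, $\rho(x,z)\ge 3r_z>R_0$ gives $\tfrac23\rho(x,z)\le\rho(x,y)\le\tfrac43\rho(x,z)$ for every $y\in B(z,r_z)$, so $\rho(x,y)\le 2\rho(x,y')$ for all $y,y'\in B(z,r_z)$. Combining (iii) with a single application of the doubling property \eqref{doubling} (valid since the relevant arguments exceed $2r_z>R_0$) yields $G(x,y)\le c^2c_D\,G(x,y')$, whence $\sup_{y}G(x,y)\le c^2c_D\inf_{y'}G(x,y')$ on $B(z,r_z)$. Together with $\|\mu_z\|\le\|\nu_z\|$ this gives
\[
   G\mu_z(x)\le\Bigl(\sup\nolimits_{y}G(x,y)\Bigr)\|\mu_z\|\le c^2c_D\Bigl(\inf\nolimits_{y'}G(x,y')\Bigr)\|\nu_z\|\le c^2c_D\,G\nu_z(x).
\]
Summing over $z\ne z_0$ produces $\sum_{z\ne z_0}G\mu_z(x)\le c^2c_D\,G\nu(x)$, and adding the near term gives the claimed inequality on $\supp\mu$.

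The step I expect to be the main obstacle is precisely the reduction to $\supp\mu$. At a general point $x$ the separation only forces $\rho(x,z)\ge 3r_z$ when $x$ lies in (the closure of) some $B(z_0,r_{z_0})$; for arbitrary $x$ two distinct balls may simultaneously satisfy $\rho(x,z)<3r_z$, so there is no single ``near'' ball to absorb into $w(x)$ and the direct pointwise argument breaks down. Restricting to $\supp\mu$ via the minimum principle is what makes the near/far dichotomy unambiguous, and verifying that $G\mu$ is genuinely a potential (so that the principle applies) is the one point that must be checked with care.
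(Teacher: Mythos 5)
Your proof is correct and follows essentially the same route as the paper's: a pointwise near/far splitting on the balls, with the near term absorbed into $w$ and the far terms controlled by the doubling-based comparison $G(x,y)\le c^2c_D\,G(x,y')$ for $y,y'$ in a separated ball, followed by the minimum principle \cite[III.6.6]{BH}. The only (immaterial) differences are that the paper proves the estimate on the open balls and invokes the minimum principle afterwards, and compares $\rho(x,y)$ with $\rho(x,y')$ directly via $\rho(y,y')<2r_{z'}<\rho(x,y)$ rather than routing through the center $z$.
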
 

\begin{proof} 
Let $z,z'\in Z$, $z'\ne z$, and $x\in   B(z,r_z)$. For all $y,y'\in   B(z',r_{z'})$,  $\rho(y,y')<2r_{z'}<  \rho(x,y)$,
 hence $ \rho(x,y')\le  2\rho(x,y) $, $g (\rho(x,y))\le c_D g(2\rho(x,y))\le c_D g(\rho(x,y'))$, and $G(x,y)\le c^2c_D G(x,y')$. By integration, 
$G\mu_{z'} (x)\le c^2c_D G\nu_{z'}(x)$. Therefore
\begin{center}
$        G\mu(x)= G\mu_z(x)+\sum\nolimits_{z'\in Z, z'\ne z} G\mu_{z'}(x)\le w(x)+c^2c_D G\nu(x)$.
\end{center}
Thus $G\mu\le w+c^2 c_D G\nu$ on the union $A$ of the balls $  B(z,r_z)$, $z\in Z$.  By the  minimum principle (see \cite[III.6.6]{BH}),
the proof is finished.
\end{proof}

\begin{lemma}\label{shell-1}
Let $x_0\in X$, $R>2R_0$ and  $B:=B(x_0,R)$. Suppose that there exist $C\ge 1$ and a probability measure $\lambda$
on $B$ such that $G\lambda\le C g(R)$.
Let    $Z$ be a~finite subset of~$B(x_0,R/2)$  and $ R_0<r_z\le R/2$, $z\in Z$,  such that 
the balls~$B(z, 3  r_z)$ are pairwise disjoint and, for some $\ve\in (0,1)$,
\begin{equation}\label{sep}
g(r_z)   \lambda(B(z,\rho(z,z')/4 ))      \, \ge \, \ve g(R)  
,\qquad\mbox{ whenever }   z\ne z'.
\end{equation} 
 Then the union $A$   of the balls $B(z,r_z)$, $z\in Z$, satisfies 
\[
\kap A\ge \ve (2c^3c_DC)\inv    \sum\nolimits_{z\in Z} \kap B(z,r_z). 
\]
\end{lemma}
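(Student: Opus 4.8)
The goal is a lower bound for $\kap A$, and since $A$ is open it suffices, by the definition of inner capacity, to produce one measure $\mu$ carried by $A$ with $G\mu\le K:=2c^3c_DC/\ve$ and $\|\mu\|=\sum_{z\in Z}\kap B(z,r_z)$; then $\kap A=\kapi A\ge\|\mu\|/K$ is exactly the claim. I would take $\mu:=\sum_{z\in Z}\mu_z$, where $\mu_z$ is the equilibrium measure of $B(z,r_z)$. By Proposition \ref{Rge} the function $R_1^{B(z,r_z)}=G\mu_z$ is a potential with $G\mu_z\le 1$, $\|\mu_z\|=\kap B(z,r_z)$, and $\kap B(z,r_z)\le cg(r_z)\inv$. (Passing to slightly smaller concentric balls, one may assume each $\mu_z$ is carried by $A$ itself; this costs nothing in the limit.)

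Since the balls $B(z,3r_z)$ are pairwise disjoint we have $B(z,r_z)\cap B(z',3r_{z'})=\emptyset$ whenever $z\ne z'$, which is precisely the geometric hypothesis of the comparison Lemma \ref{comparison}. Using $w:=1\in\W\cap\C(X)$ (admissible because $1$ is harmonic and dominates every $G\mu_z$), I would apply that lemma with suitable comparison measures $\nu_z$ carried by $B(z,r_z)$ and satisfying $\|\nu_z\|\ge\|\mu_z\|$, obtaining $G\mu\le 1+c^2c_D\,G\nu$ on all of $X$, where $\nu:=\sum_{z\in Z}\nu_z$.

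Everything then hinges on choosing the $\nu_z$ so that $\nu\le\frac{c}{\ve g(R)}\lambda$. Granting this, the standing estimate $G\lambda\le Cg(R)$ gives $G\nu\le\frac{c}{\ve g(R)}G\lambda\le\frac{cC}{\ve}$, hence $G\mu\le 1+c^2c_D\cdot\frac{cC}{\ve}=1+\frac{c^3c_DC}{\ve}\le K$, and the proof closes. The bound $\nu\le\frac{c}{\ve g(R)}\lambda$ is where the separation hypothesis (\ref{sep}) is consumed: together with $\kap B(z,r_z)\le cg(r_z)\inv$, the inequality $g(r_z)\lambda(B(z,\rho(z,z')/4))\ge\ve g(R)$ yields the lower bound $\lambda(B(z,\rho(z,z')/4))\ge\ve g(R)/g(r_z)\ge(\ve g(R)/c)\kap B(z,r_z)$ on the $\lambda$-mass near $z$, which controls the density of $\nu$ relative to $\lambda$; disjointness of the relevant balls then upgrades this pointwise density bound to the global domination.

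The main obstacle is reconciling the radius $\rho(z,z')/4$ occurring in (\ref{sep}) with the radius $r_z$ on which $\nu_z$ must live for Lemma \ref{comparison}. When some competitor $z'$ has $\rho(z,z')/4\le r_z$ one can simply take $\nu_z=\kap B(z,r_z)\,\lambda_{B(z,r_z)}$ and read off the density bound at once. In the opposite regime I would instead carry $\nu_z$ on the enlarged ball $B(z,\rho(z,z')/4)$ for the nearest neighbour $z'$; these enlarged balls are again pairwise disjoint, and one must verify that the resulting mixed configuration still meets the threefold separation required by Lemma \ref{comparison}, the doubling property (\ref{doubling}) ensuring that the underlying Green-function comparison survives the enlargement. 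Tracking the constants carefully through both regimes is what produces the factor $2c^3c_DC/\ve$.
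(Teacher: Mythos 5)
Your overall strategy coincides with the paper's: apply Lemma \ref{comparison} with $w=1$ to $\mu:=\sum_{z}\mu_z$, taking comparison measures $\nu_z$ proportional to $\lambda$ restricted to the enlarged balls $B(z,\tilde r_z)$, $\tilde r_z:=\max\bigl\{r_z,\dist(z,Z\setminus\{z\})/4\bigr\}$ (the paper handles your two regimes in one stroke with this definition, noting $\tilde r_z\le R/2$, so these balls stay inside $B$, are pairwise disjoint, and satisfy $B(z,\tilde r_z)\cap B(z',3\tilde r_{z'})=\emptyset$ -- the verification you defer); then the density bound $\nu\le c(\ve g(R))\inv\lambda$ follows from (\ref{sep}) together with $\|\mu_z\|\le c\,g(r_z)\inv$, and $G\lambda\le Cg(R)$ gives $G\mu\le 1+c^3c_DC\ve\inv\le 2c^3c_DC\ve\inv$. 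All of this matches the paper's proof.

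The genuine gap is at the very first step: you take $\mu_z$ to be the equilibrium measure of $B(z,r_z)$ and assert $\|\mu_z\|=\kap B(z,r_z)$, citing Proposition \ref{Rge}. That proposition only states $\|\mu_B\|\vee\kap B\le c\,g(r)\inv$; it does not identify the two quantities, and in this setting they need not be equal. First, the equilibrium measure of the open ball is supported by the \emph{closed} ball, so it is not even an admissible competitor in the definition (\ref{inner-cap}) of $\kapi$. Second, $G$ is only quasi-symmetric ($G(x,y)\le c^2 G(y,x)$), and the best available comparison is Lemma \ref{cap-potential}: the supremum of $\|\mu_V\|$ over open $V$ with $\ov V$ compact in $U$ lies between $c^{-2}\kap U$ and $\kap U$, with no equality in general. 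Hence your parenthetical ``passing to slightly smaller concentric balls costs nothing in the limit'' is false: it recovers only $c^{-2}\kap B(z,r_z)$, so your argument, carried out faithfully, proves the lemma with the weaker constant $\ve(2c^5c_DC)\inv$ (and the auxiliary claim $\|\nu_z\|\ge\|\mu_z\|$ in your first regime suffers from the same unjustified identification). The fix -- and what the paper does -- is to dispense with equilibrium measures altogether: let $\mu_z$ be an \emph{arbitrary} measure with $\mu_z(X\setminus B(z,r_z))=0$ and $G\mu_z\le 1$ (then $\|\mu_z\|\le c\,g(r_z)\inv$ still holds, since $G\ge c\inv g(r_z)$ on the ball), run your comparison argument verbatim to obtain $\kap A\ge \ve(2c^3c_DC)\inv\sum_{z}\|\mu_z\|$, and only at the end take the supremum over all such $\mu_z$, which by the definition of inner capacity yields $\sum_{z}\kap B(z,r_z)$ with the stated constant.
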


\begin{proof} It clearly suffices to consider the case, where $Z$ contains more than one point.
Then, for $z\in Z$,
\begin{equation}\label{def-tilde-r}
     \tilde r_z:=   \max\bigl\{r_z, \dist(z, Z\setminus \{z\})/4\bigr\}\le R/2, 
\end{equation} 
  hence  $B(z,\tilde r_z)\subset B$ and $\lambda(B(z,\tilde r_z))>0$, by (\ref{sep}).
Further,  $ B(z,\tilde r_z)\cap B(z',3\tilde r_{z'})=\emptyset$, whenever  $ z\ne z'$,

For  $z\in Z$, let $\mu_z\in \M(X)$ with $\mu_z(X\setminus B(z,r_z))=0$ and $G\mu_z\le 1$, and let
\[
\a_z:=\|\mu_z\|/\lambda(B(z,\tilde r_z)), \qquad \nu_z:=\a_z1_{B(z,\tilde r_z)} \lambda.
\] 
 Then $\|\nu_z\|=\|\mu_z\|$ and,  by  Proposition \ref{Rge},  (\ref{def-tilde-r}), and (\ref{sep}), 
\[
\a_z\le c g(r_z)\inv/\lambda(B(z,\tilde r_z))\le  c (\ve g(R))\inv.
\]

Since the balls $B(z,\tilde r_z)$, $z\in Z$,  are pairwise disjoint subsets of~$B$,  
the measure  $\nu:=\sum_{z\in Z} \nu_z$ satisfies
\[
                        G\nu \le  c (\ve  g(R))\inv  G\lambda\le cC\ve\inv.
\] 
Let  $\mu:=\sum_{z\in Z} \mu_z$. By  Lemma \ref{comparison},    $ G\mu\le 1+c^2 c_D G\nu$. 
Thus $G\mu\le 2 c^3c_DC\ve\inv$. 
Since $\mu(X\setminus A)=0$,  we see that
$
          \kap A\ge \ve (2c^3c_DC)\inv \sum\nolimits_{z\in Z}  \|\mu_z\|
$
completing the proof.
\end{proof}

In addition to the Assumptions \ref{ass-1} and \ref{ass-2},  we suppose   the following.

\begin{assumption}\label{ass-coll}
We have a measure $\lambda\in \M(X)$  and a constant $c_0>0$ such that  $\supp(\lambda)=X$ 
and, for all  $x\in X$ and $r>R_0$, 
\begin{equation}\label{ass-for-sep}
            G\lambda_{B(x,r)} \le  c_0 g(r)
\end{equation} 
{\rm(}where, as before, $\lambda_B:=\lambda(B)\inv 1_B \lambda$ for every ball $B${\rm)}. 
\end{assumption} 

We already observed that then,  for all $x\in X$ and $r>R_0$, 
\begin{equation}\label{cap-lower}
                  \kap B(x,r)\ge c_0\inv g(r)\inv
\end{equation} 
(see Proposition \ref{Rge} for the corresponding upper estimate). By Proposition \ref{equiv},  
such a lower estimate for the capacity of balls is equivalent
to  having inequalities $R_1^{B(x,r)}\ge C\inv G(\cdot,x)/g(r)$ on $X\setminus B(x,r)$
(which in turn, by the minimum principle,  hold  trivially if $(X,\W)$ is a harmonic space, that is, if $\mathfrak X$
has no jumps). 

 Let us say that a family of pairwise disjoint balls $B(z,r_z)$, $z\in Z\subset X$, $r_z>2 R_0$, 
has the \emph{separation property with respect to $\lambda$}, if $Z$ is locally finite
    and, for some point~$x_0\in X\setminus Z$,  
\begin{equation}\label{separation}
     \inf\nolimits_{z,z'\in Z, z\ne z'} \frac {\lambda (B(z,\rho(z,z')/4))}{\lambda (B(x_0,4\rho(x_0,z)))} \cdot 
         \frac{g(r_z)}{g(\rho(x_0,z))} \, >\, 0.
\end{equation} 

\begin{remark}{\rm
If, for example,  $\lambda$ is Lebesgue measure on $X=\reald$, $x_0=0$, $\rho(x,y)=|x-y|$, and $g(r)=r^{\a-d}$, then (\ref{separation})
means that 
\[
     \inf\nolimits_{z,z'\in Z, z\ne z'} \frac {|z-z'|^d}{|z|^\a r_z^{d-\a}}\,>\,0,
\]
which, in the classical case $\a=2$,  is the separation property in \cite[Theorem 6]{gardiner-ghergu}.
}
\end{remark}

\begin{theorem}\label{main-sep} Let $A$ be an avoidable union of pairwise disjoint balls $B(z,r_z)$, $z\in Z\subset X$, 
having the separation property with respect to $\lambda$.
 Then 
\[
\sum\nolimits_{z\in Z} g(\rho(x_0,z)) \kap B(z,r_z)<\infty.
\]
\end{theorem}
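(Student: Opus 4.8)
The plan is to deduce finiteness of the sum from avoidability by means of Wiener's test (Theorem~\ref{wiener-intro}), after estimating, scale by scale, the capacity of $A$ on a dyadic shell from below by the sum of the capacities of the balls centred in that shell. The tool for this is Lemma~\ref{shell-1}, and the crux will be to present the balls in a form in which all of its hypotheses hold uniformly in the scale.

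First I would dispose of trivial cases and fix the scales. Since the balls are pairwise disjoint, at most one of them can contain $x_0$; discarding it removes a single finite term (recall $G(x_0,z)<\infty$ for $z\ne x_0$), so I may assume $r_z\le\rho(x_0,z)$ for every $z$. As $Z$ is locally finite and $x_0\notin Z$, the number $\rho_0:=\inf_{z\in Z}\rho(x_0,z)$ is strictly positive, and, balls being relatively compact, every annulus meets $Z$ in a finite set. Fixing $R>2R_0\vee\rho_0$ I set, for $n\in\nat$, the finite sets $Z_n:=\{z\in Z\colon 2^nR\le\rho(x_0,z)<2^{n+1}R\}$ and the enlarged shells $S_n:=S(x_0,2^{n-2}R,2^{n+2}R)$. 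Since $g$ is decreasing, $g(\rho(x_0,z))\le g(2^nR)$ for $z\in Z_n$, so it suffices to prove $\sum\nolimits_n g(2^nR)\sum\nolimits_{z\in Z_n}\kap B(z,r_z)<\infty$.

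The key step is a uniform lower bound $\kapo(A\cap S_n)\ge\kappa\sum\nolimits_{z\in Z_n}\kap B(z,r_z)$ with $\kappa>0$ independent of $n$. Here I would apply Lemma~\ref{shell-1} not to the balls $B(z,r_z)$ but to the shrunken balls $B(z,r_z/3)$: then the tripled balls required to be pairwise disjoint in that lemma are exactly the $B(z,r_z)$, so the disjointness hypothesis is nothing but the hypothesis of the theorem. Taking the big ball $B(x_0,2^{n+2}R)$ and the probability measure $\lambda_n:=\lambda_{B(x_0,2^{n+2}R)}$, Assumption~\ref{ass-coll} supplies $G\lambda_n\le c_0\,g(2^{n+2}R)$, i.e.\ the measure hypothesis with uniform constant $C=c_0$. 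The separation condition~(\ref{sep}) at scale $2^{n+2}R$ follows from the global separation property~(\ref{separation}): for $z,z'\in Z_n$, $z\ne z'$, one has $4\rho(x_0,z)\ge 2^{n+2}R$, $\rho(x_0,z)<2^{n+2}R$, and $B(z,\rho(z,z')/4)\subset B(x_0,2^{n+2}R)$, so, $g$ being decreasing and $\lambda(B(x_0,\cdot))$ increasing, (\ref{separation}) yields $g(r_z)\,\lambda_n(B(z,\rho(z,z')/4))\ge\ve\,g(2^{n+2}R)$ with a fixed $\ve>0$, and passing to $g(r_z/3)\ge g(r_z)$ only helps. As $r_z\le\rho(x_0,z)$, the balls $B(z,r_z/3)$ lie in $S_n\cap A$, so Lemma~\ref{shell-1} and monotonicity of $\kapo$ give $\kapo(A\cap S_n)\ge\kappa\sum_{z\in Z_n}\kap B(z,r_z/3)$; finally the doubling property together with Proposition~\ref{Rge} and (\ref{cap-lower}) gives $\kap B(z,r_z)\le c\,c_0\,c_D^{2}\,\kap B(z,r_z/3)$, so the factor $\tfrac13$ is removed at the cost of a constant.

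It then remains to sum over $n$ and invoke avoidability. The enlarged shells $S_n$ overlap with bounded multiplicity, so splitting $n$ into the four residue classes modulo $4$ produces four families of pairwise disjoint annuli, each being a family of Wiener shells of ratio $2^4$; since $A$ is avoidable, Theorem~\ref{wiener-intro} makes $\sum_n g(2^nR)\,\kapo(A\cap S_n)$ finite (using also $g(2^{n-2}R)\le c_D^{2}g(2^nR)$ to match the coefficients). Combined with the preceding paragraph this gives $\sum_n g(2^nR)\sum_{z\in Z_n}\kap B(z,r_z)<\infty$, which is the assertion. I expect the main obstacle to be exactly the dilated-disjointness requirement of Lemma~\ref{shell-1}, met here by the passage to the radii $r_z/3$; the only residual nuisance, that the lemma needs $r_z/3>R_0$ whereas the separation property grants only $r_z>2R_0$, is absorbed by the freedom to lower $R_0$ in the doubling threshold (Remark following Assumption~\ref{ass-2}).
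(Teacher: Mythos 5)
Your proposal is correct and takes essentially the same route as the paper's own proof: the paper likewise shrinks the radii so that the dilated balls required in Lemma~\ref{shell-1} become disjoint (by a factor $4$ rather than your $3$), applies that lemma scale by scale with the normalized restriction of $\lambda$ to a large ball, and finishes with Theorem~\ref{wiener-intro}, covering $Z$ by finitely many shifted shell families (starting radii $R,2R,4R$ with ratio $8$, versus your residue classes modulo $4$ with ratio $16$). The one caveat is your last step: the comparison $\kap B(z,r_z)\le c\,c_0\,c_D^2\,\kap B(z,r_z/3)$ invokes (\ref{cap-lower}) at radius $r_z/3$, and the remark after Assumption~\ref{ass-2} lowers only the doubling threshold for $g$, not the threshold $R_0$ in Assumption~\ref{ass-coll} on which (\ref{cap-lower}) rests --- but this imprecision (relevant only when $R_0>0$ and there are infinitely many $z$ with $r_z\le 3R_0$) is shared by the paper itself, whose reduction to disjoint balls $B(z,4r_z)$ needs the same lower capacity bound at radius $r_z/4$, so it should be regarded as an inherited gap in the hypotheses (cf.\ the stronger requirement $r_z>4R_0$ in Corollary~\ref{balls-appl}) rather than a defect of your argument.
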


\begin{proof} We may suppose    that $\rho(x_0,z)>4R_0$, for every $z\in Z$  (we simply omit finitely many points from $Z$). 
Moreover, we may assume without loss of generality that 
\begin{equation}\label{rz}
                          r_z\le \rho(x_0,z)/2, \qquad\mbox{ for every }z\in Z.
\end{equation} 
Indeed, replacing $r_z$ by  
$
r_z':=\min\{r_z,\rho(x_0,z)/2\}
$
our assumptions are  preserved.
Suppose we have shown that   $\sum_{z\in Z} {g(\rho(x_0,z))} /{g(r_z')}<\infty$.
Since $g(r)/g(r/2)\ge c_D\inv$ if $r>2R_0$, the set $Z'$ of all points $z\in Z$ such that 
$r_z'=\rho(x_0,z)/2$ is finite, and therefore
$\sum_{z\in Z'} g(\rho(x_0,z))/g(r_z)<\infty$. So we may assume without loss of generality 
that $r_z'=r_z$, for all $z\in Z$, that is,  (\ref{rz}) holds.

Further, we may assume   that the balls $B(z, 4r_z)$  are pairwise
disjoint. Indeed,   since  $\kap B(z,r_z)\approx g(r_z)\inv$ and 
$g(r)\le g(r/4) \le c_D^2 g(r)$, $r>2R_0$,
 a replacement of~$r_z$ by~$r_z/4$ does neither affect~(\ref{separation}) nor the convergence or divergence 
of $\sum_{z\in Z} {g(\rho(x_0,z))} \kap B(z,r_z)$, 
 and the new, smaller union is, of course, avoidable.

 By (\ref{separation}),   there exists $\ve\in (0,1)$  such that,  for  $z,z'\in Z$,  $ z\ne z'$,
\begin{equation}\label{strong-sep}
g(r_z) \lambda(B(z,\rho(z,z')/4))  \ge  \ve  g(\rho(x_0,z)) \lambda(B(x_0,4 \rho(x_0,z))) . 
\end{equation} 

Let $R> R_0$. For  $n\in\nat$, let
\[
        Z_n:=Z\cap S(x_0,2\cdot 8^n R, 4\cdot 8^n R) \und A_n:=\bigcup\nolimits_{z\in Z_n} B(z,r_z).
\]
Then $A_n\subset   A\cap S(x_0,8^nR,8^{n+1}R)$. Moreover, for every $z\in Z_n$, $\rho(x_0,z)\ge 8^n R$,
and hence $g(\rho(x_0,z))\le g(8^n R)$. Therefore, by Lemma \ref{shell-1} and Theorem \ref{wiener-intro},
\begin{multline*} 
    \sum\nolimits_{n\in\nat} \sum\nolimits_{z\in Z_n} g(\rho(x_0,z))\kap B(z,r_z) \\
\le 2c^3c_Dc_0\ve\inv 
\sum\nolimits_{n\in\nat} g(8^n R)\kapo (A\cap S(x_0, 8^n R, 8^{n+1} R))<\infty.
\end{multline*} 
Applying this estimate as well to $2R$ and $4R$ in place of $R$ we obtain that
\[
  \sum\nolimits_{z\in Z} g(\rho(x_0,z))\kap B(z,r_z)<\infty.
\]
\end{proof}

\begin{corollary}\label{final-corollary}
For every union $A$ of  balls $B(z,r_z)$, $z\in Z\subset X$, 
having the separation property with respect to $\lambda$, the following statements are equivalent.
\begin{itemize} 
\item[\rm (1)] The set $A$ is unavoidable.
\item[\rm (2)] $\sum\nolimits_{z\in Z} g(\rho(x_0,z)) \kap B(z,r_z)=\infty$.
\item [\rm (3)] $\sum\nolimits_{z\in Z} g(\rho(x_0,z)) /g(r_z) =\infty$.
\end{itemize}
\end{corollary}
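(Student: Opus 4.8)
The plan is to prove the three statements equivalent by establishing the cycle $(1)\Rightarrow(3)\Rightarrow(2)\Rightarrow(1)$, relying entirely on results already at our disposal. The key preliminary observation is that, for the balls in question, the capacity is comparable to $g(r_z)\inv$: since the separation property forces $r_z>2R_0$, the upper bound $\kap B(z,r_z)\le c\,g(r_z)\inv$ from Proposition \ref{Rge} combines with the lower bound (\ref{cap-lower}) to give
\[
c_0\inv g(r_z)\inv\le \kap B(z,r_z)\le c\,g(r_z)\inv,\qquad z\in Z.
\]

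With this in hand, the three implications are short. The implication $(1)\Rightarrow(3)$ is exactly Corollary \ref{ball-nec}, which requires only that $A$ be an unavoidable union of balls. For $(3)\Rightarrow(2)$ I would multiply the lower capacity estimate $\kap B(z,r_z)\ge c_0\inv g(r_z)\inv$ by $g(\rho(x_0,z))$ and sum over $z$, so that divergence of $\sum_{z\in Z} g(\rho(x_0,z))/g(r_z)$ immediately forces divergence of $\sum_{z\in Z} g(\rho(x_0,z))\,\kap B(z,r_z)$. Finally, $(2)\Rightarrow(1)$ is the contrapositive of Theorem \ref{main-sep}: were $A$ avoidable, that theorem would yield $\sum_{z\in Z} g(\rho(x_0,z))\,\kap B(z,r_z)<\infty$, contradicting (2).

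Because the corollary only assembles the work carried out in Corollary \ref{ball-nec} and Theorem \ref{main-sep}, there is no genuine obstacle to overcome; the one point deserving care is the verification that the separation property supplies all the hypotheses needed by those results --- namely that the balls are pairwise disjoint, that $Z$ is locally finite, and that $r_z>2R_0$ --- so that both the capacity comparison above and Theorem \ref{main-sep} apply without modification. I note that $x_0$ throughout is the fixed point furnished by the separation property, and that Corollary \ref{ball-nec} holds for this (indeed for any) choice of $x_0$, so the three statements refer to one and the same reference point and the cycle closes consistently.
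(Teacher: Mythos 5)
Your proposal is correct and takes essentially the same route as the paper, whose entire proof reads: ``Corollary \ref{ball-nec} and Theorem \ref{main-sep} using $\kap B(z,r_z)\approx g(r_z)\inv$, $z\in Z$.'' Your cycle $(1)\Rightarrow(3)\Rightarrow(2)\Rightarrow(1)$ simply makes explicit how those two results and the capacity comparison are assembled (noting that for the implication $(3)\Rightarrow(2)$ only the lower bound (\ref{cap-lower}) is needed, and that the hypotheses $r_z>2R_0$, pairwise disjointness and local finiteness are built into the definition of the separation property).
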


\begin{proof} 
Corollary \ref{ball-nec} and Theorem \ref{main-sep} using   $\kap B(z,r_z) \approx  g(r_z)\inv$, $z\in Z$.
\end{proof}

\section{Application to regularly located balls}\label{distr}

In this section we suppose as before that  the Assumptions~\ref{ass-1},   \ref{ass-2},  and \ref{ass-coll} are satisfied. 
  Moreover, let us assume 
that we have a~distinguished point $x_0\in X$  such that  the  measure~$\lambda$
has the following additional properties:
\begin{itemize}
\item [\rm (i)]
For all  $x,y\in X$ and  $r>R_0$,
\begin{equation}\label{trans}
  \lambda(B(y,r))\le c_0\lambda (B(x,r)). 
\end{equation} 
\item [\rm (ii)]
There exist $C_D\in (1,\infty)$, $\kappa\in (0,1)$   such that, for all $r>R_0$,
\begin{equation}\label{double-ball-shell}
   \lambda(B(x_0,2r))\le C_D \lambda(S(x_0,\kappa r, r)).
\end{equation}
\end{itemize} 

Let $A$ be a union of balls $B(z,r_z)$, $z\in Z$, which is regularly located 
(see \ref{reg-loc}). We first prove the following proposition.

\begin{proposition}\label{MV}
Suppose that 
$\limsup_{r\to \infty} \lambda(B(x_0,r))g(r)/g(\phi(r))>0$.
 Then  $A$  is unavoidable.
\end{proposition}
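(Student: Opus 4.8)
The plan is to verify Wiener's test (Theorem~\ref{wiener-intro}): fixing $R>0$ and a sufficiently large $\g>1$, I will produce infinitely many indices $n$ for which the summand $g(\g^n R)\,\kapo\bigl(A\cap S(x_0,\g^n R,\g^{n+1}R)\bigr)$ is bounded below by a fixed positive constant; the series then diverges and $A$ is unavoidable. The engine for the capacity lower bounds on these shells will be Lemma~\ref{shell-1}.

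I would first record the reductions. The hypothesis gives $\eta>0$ and $r_k\uparrow\infty$ with $\lambda(B(x_0,r_k))\,g(r_k)/g(\phi(r_k))\ge\eta$. Since (\ref{double-ball-shell}) forces $\lambda(B(x_0,2r))\le C_D\lambda(B(x_0,r))$, the measure $\lambda$ is doubling at $x_0$; combining this with the doubling of $g$ and the monotonicity of $\phi$, I pass to the geometric scales and obtain a strictly increasing sequence $(n_k)$ such that, writing $s_k:=\g^{n_k}R$, the quantities $q_k:=\lambda(B(x_0,s_k))\,g(s_k)/g(\phi(s_k))$ still satisfy $q_k\gtrsim\eta$. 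From Definition~\ref{reg-loc} I have $r_z\approx\phi(\rho(x_0,z))$, and since the balls are pairwise disjoint while every ball of a fixed radius meets $Z$, the nearest-neighbour distances are bounded, so the radii $r_z$ are bounded; shrinking each $r_z$ by a fixed factor --- which alters $g(r_z)$ only by a bounded factor, by doubling, and preserves all hypotheses --- I may assume the inflated balls $B(z,3r_z)$ are pairwise disjoint.

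Next I would carry out the counting estimate, which I expect to be the delicate point. Fix one scale $s=s_k$, let $Z_s$ be the points of $Z$ in $S(x_0,s,\g s)$ and put $A_s:=\bigcup_{z\in Z_s}B(z,r_z)$. The separation $\rho(z,z')\ge\ve$ makes the balls $B(z,\ve/2)$ disjoint, and (\ref{trans}) makes $\lambda(B(\cdot,\d))$ comparable over all centres at each fixed radius~$\d$; together with the doubling of $\lambda$ this gives the upper bound $\#Z_s\lesssim\lambda(B(x_0,s))$. For the matching lower bound I would use that, by (\ref{double-ball-shell}), a suitable sub-shell carries mass $\gtrsim\lambda(B(x_0,s))$, and cover it by the balls $B(z,R')$ (with $R'$ the density radius), each of comparable mass by (\ref{trans}); for $\g$ large the covering centres lie in $S(x_0,s,\g s)$, so this yields $\#Z_s\gtrsim\lambda(B(x_0,s))$, hence $\#Z_s\approx\lambda(B(x_0,s))$.

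Finally I would apply Lemma~\ref{shell-1} on the ball $B:=B(x_0,R_{\mathrm{big}})$ with $R_{\mathrm{big}}:=2\g s$ and the probability measure $\lambda_B$, which satisfies $G\lambda_B\le c_0\,g(R_{\mathrm{big}})$ by (\ref{ass-for-sep}), so that the lemma applies with $C=c_0$. For $z\ne z'$ in $Z_s$, estimating $g(r_z)\,\lambda_B(B(z,\rho(z,z')/4))\gtrsim g(\phi(s))\,\lambda(B(x_0,\ve/4))/\lambda(B(x_0,R_{\mathrm{big}}))$ shows that (\ref{sep}) holds with $\ve_s:=\min\{1/2,\;c'g(\phi(s))/(\lambda(B(x_0,s))\,g(s))\}=\min\{1/2,\,c'/q_k\}$. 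Lemma~\ref{shell-1} together with (\ref{cap-lower}) and $g(r_z)\approx g(\phi(s))$ then gives
\[
g(s)\,\kapo A_s\ \ge\ g(s)\,\kap A_s\ \gtrsim\ \ve_s\,g(s)\sum\nolimits_{z\in Z_s}\kap B(z,r_z)\ \approx\ \ve_s\,\lambda(B(x_0,s))\,\frac{g(s)}{g(\phi(s))}\ =\ \ve_s\,q_k\ \gtrsim\ \min\{\eta,1\}.
\]
Since $A_s\subset A\cap S(x_0,\g^{n_k}R,\g^{n_k+1}R)$, the $n_k$-th summand in Wiener's test exceeds this positive constant for every $k$; as there are infinitely many such $k$ and the summands are nonnegative, the series diverges and $A$ is unavoidable. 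The main obstacle is the two-sided count $\#Z_s\approx\lambda(B(x_0,s))$ and keeping the chain of constants controlled so that $\ve_s\,q_k$ stays bounded away from $0$.
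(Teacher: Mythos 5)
Your reduction to Wiener's test and your counting estimate $\#Z_s\approx\lambda(B(x_0,s))$ are essentially sound, but there is a genuine gap at the step ``$g(r_z)\approx g(\phi(s))$ for $z\in Z_s$'', and it sits exactly where the difficulty of the proposition lies. From (\ref{radius-phi}) you only know $\phi(\rho(x_0,z))<r_z<C\phi(\rho(x_0,z))$, and on the shell $S(x_0,s,\g s)$ the value $\phi(\rho(x_0,z))$ ranges over all of $[\phi(\g s),\phi(s)]$. Monotonicity and the doubling of $g$ do give the one-sided bound $g(r_z)\ge g(C\phi(\rho(x_0,z)))\ge g(C\phi(s))\gtrsim g(\phi(s))$, which is what the validity of (\ref{sep}) with your $\ve_s$ requires; but the opposite bound $g(r_z)\lesssim g(\phi(s))$, which you need in order to deduce $\kap B(z,r_z)\gtrsim g(\phi(s))\inv$ from (\ref{cap-lower}), only comes out as $g(r_z)\le g(\phi(\g s))$, and \emph{nothing} in the hypotheses controls $g(\phi(\g s))/g(\phi(s))$: $\phi$ is an arbitrary decreasing function, so it may collapse by an unbounded factor between $s$ and $\g s$, and the doubling of $g$ does not help because it only handles bounded multiplicative changes of its argument. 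Thus the largest admissible separation constant is governed by the \emph{smallest} value of $g(r_z)$ in the shell (worst case at the inner edge), while your per-ball capacity bound is governed by the \emph{largest} such value (worst case at the outer edge), so the final product silently loses the factor
\[
   \frac{\min\nolimits_{z\in Z_s}g(r_z)}{\max\nolimits_{z\in Z_s}g(r_z)}\ \approx\ \frac{g(\phi(s))}{g(\phi(\g s))},
\]
which is not bounded away from $0$: for $g$ of power type and $\phi$ piecewise constant with rapidly growing jumps placed just above the scales realizing the $\limsup$, this factor tends to $0$ along the shells your recipe selects. Repositioning the shells relative to the good scales $r_k$ does not repair this, since the two worst cases sit at opposite edges of the same shell.

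The missing idea --- and it is precisely the device of the paper's own proof --- is to forget the true radii and work with concentric sub-balls of one \emph{common} radius: if $T$ is the outer radius of the shell, then every $z$ in the shell satisfies $r_z>\phi(\rho(x_0,z))\ge\phi(T)=:r_0$ because $\phi$ is decreasing, so $B(z,r_0)\subset A$. Running your Lemma~\ref{shell-1} argument with the balls $B(z,r_0)$ makes the separation constant and the per-ball capacity refer to the same number $g(r_0)$, which cancels in the product; choosing the shell so that its top $T=\g^{n_k+1}R$ lies in $(r_k/\g,r_k]$ then yields $g(\g^{n_k}R)\,\kap A_s\gtrsim\min\bigl\{\lambda(B(x_0,T))g(T)/g(\phi(T)),\,1\bigr\}\gtrsim\min\{\eta,1\}$, because $\lambda(B(x_0,\cdot))$, $g$ and $g\circ\phi$ are monotone and $\lambda$ is doubling at $x_0$. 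With that correction your route through Theorem~\ref{wiener-intro} is viable, and it is genuinely different from the paper's: the paper never invokes Wiener's test here, but instead fixes $x\in X$, picks a good scale $r$ large compared with $\rho(x_0,x)$, selects $m\gtrsim\lambda(B(x_0,r))/\lambda(B(x_0,R))$ points of $Z$ in the shell $S(x_0,\kappa r/2,r/2)$, and estimates $p:=\sum_j R_1^{B(z_j,\phi(r))}$ from below at the point $x$ by (\ref{rbg-2}) and from above globally by Lemma~\ref{comparison} and the minimum principle; this gives a lower bound for $R_1^A(x)$ that is independent of $x$, and the zero-one law (Proposition~\ref{so-simple}) concludes.
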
 

\begin{proof} We may assume without loss of generality that the radius $R$
in (\ref{reg-loc}) satisfies $R>  \max\{1,\phi(1),R_0\}$.
We define 
\[
        a:=(cc_0^2 C_D^5 \lambda(B(x_0, R))\inv \und b:=  cc_0^2\lambda(B(x_0,  R))\inv.
\]
Let $0<\b<\limsup_{r\to \infty} \lambda(B(x_0,r))g(r)/g(\phi(r))$. 

We now  fix $x\in X$ and choose $  r>\kappa\inv( 4R +2\rho(x_0,x)) $
such that $r>\phi(r)$ and 
\begin{equation}\label{rho-choice}
\g:=\lambda(B(x_0,r)) g(r)/g( \phi(r))> \b. 
\end{equation} 
Let
\[
   S:=S(x_0,\kappa r/2, r/2), \qquad
      B:=B(x_0,r) \und  r_0:=  \phi(r)
\]
so that 
\[
        \g = \lambda(B) g(r)/g(r_0).
\]

There are finitely many points $y_1,\dots,y_m\in \ov S$ 
 such that  
$B(y_1,3R),\dots, B(y_m,3R)$ are pairwise disjoint and
$\ov S$ is covered by $B(y_1,9R), \dots, B(y_m,9R)$.
We may choose  $z_j\in Z\cap B(y_j,R)$, $1\le j\le m$. Then $\rho(z_i, z_j)\ge \rho(y_i,y_j)-2R\ge 4R$,
and hence
\begin{equation}\label{distance} 
B(z_i,R)\cap B(z_j,3R)=\emptyset,
\end{equation} 
for all $i,j\in \{1,\dots, m\}$ with $i\ne j$. Moreover,       
\begin{equation}\label{m-lower} 
\lambda(B)\le C_D\lambda(S)\le C_D \sum\nolimits_{j=1}^m \lambda
(B(y_j,9R))\le m C_D^5 c_0 \lambda(B(x_0,R)).  
\end{equation} 
 
Let $1\le j\le m$. Clearly,
\[
r>r-R> r/2+R\ge \rho(x_0,z_j) \ge \kappa r/2-R\ge R\ge 1.
\]
 Therefore $B(z_j,R)\subset B$ and $r_0=\phi(r)\le \phi(\rho(x_0,z_j))<r_{z_j}$,   hence $\ov B(z_j,r_0)\subset A$. Moreover,
$r_0 \le \phi(1)\le R$ and  $r_0+\rho(x,z_j) \le R+\rho(x_0,x)+ r/2+R\le  r$.
So $g(\rho(x,z_j)+r_0)\ge  g(r)$ and, by  (\ref{rbg-2}) and (\ref{cap-lower}), 
\begin{equation}\label{Rzj}
                 R_1^{B(z_j,   r_0)}(x)\ge (cc_0)\inv g(\rho(x,z_j)+r_0)/g(r_0) \ge   (cc_0)\inv g(r)/g(r_0).
\end{equation} 
Let $\mu_j$ be the equilibrium measure for $B(z_j,r_0)$, $1\le j\le m$. We define
\[
       p:=   \sum\nolimits_{j=1}^m R_1^{B(z_j,r_0)}=\sum\nolimits_{j=1}^m G\mu_j.
\]
Then, by (\ref{Rzj}) and (\ref{m-lower}), 
\begin{equation}\label{est-p}
  p(x) \ge m  (cc_0)\inv g(r)/g(r_0) \ge   a\g .
\end{equation} 

Finally, let $\nu:=\sum\nolimits_{j=1}^m \nu_j$, where 
\[
          \nu_j:=c g(r_0)\inv \lambda_{B(z_j,R)} \le cc_0g(r_0)\inv \frac{\lambda(B)}{\lambda(B(x_0,R))} \, 1_{B(z_j,R)} \lambda_B. 
\]
Since   $B(z_1,R), \dots, B(z_j, R)$ are pairwise disjoint subsets of~$B$ and 
$G\lambda_B\le  c_0 g(r)$, 
\[ 
       G\nu\le   cc_0 g(r_0)\inv  \frac{\lambda(B)}{\lambda(B(x_0,R))} \, G\lambda_B \le cc_0^2  \frac{\lambda(B)}{\lambda(B(x_0,R))}\, g(r)/g(r_0)
 = b\g.
\] 
By Proposition \ref{Rge}, $\|\mu_j\|\le c g(r_0)\inv$,  
 $1\le j\le m$. Thus, by (\ref{distance})  and Lemma \ref{comparison}, 
\[
                p    \le 1+c^2c_D G\nu\le 1+c^2c_D b\g.
\]
Since $\mu$ is supported by the   compact 
$\ov B(z_1,r_0)\cup\dots \cup \ov B(z_m,r_0)$ in $A$, this implies that 
\[
             R_1^A\ge  (1+c^2c_D b\g)\inv p, 
\]
by  the minimum principle (see ~\cite[III.6.6]{BH}). In particular, 
\[
           R_1^A(x)\ge  \frac{a\g}{1+c^2c_Db\g}=\frac a{\g\inv +c^2c_Db} > \frac a {\b\inv +c^2 c_D b}\, ,
\]
by (\ref{est-p}) and (\ref{rho-choice}).  Thus  $A$ is unavoidable, by Proposition  \ref{so-simple}. 
\end{proof}

\begin{proof}[Proof of Corollary \ref{corollary-sep}]
If $A$ is unavoidable, then $\sum_{z\in Z} g(\rho(x_0,z))/g(r_z)=\infty$, by Corollary \ref{ball-nec}.

To prove the converse, suppose that $\sum_{z\in Z} g(\rho(x_0,z))/g(r_z)=\infty$. 
By Proposition \ref{MV} and (\ref{double-ball-shell}), it suffices to consider the case
\begin{equation}\label{sufficient}
\lim\nolimits_{r\to\infty} \lambda(B(x_0,4 r)) g(r)/g(\phi(r))=0.
\end{equation} 
Then  $\inf\nolimits_{r>0} g(\phi(r)) \bigl(\lambda(B(x_0,4 r)) g(r)  )\bigr)\inv >0$. By (\ref{radius-phi}), 
$g(\phi(r_z))\approx g(\rho(x_0,z))$, $z\in Z$. Moreover,
 \[
 \lambda(B(z, \rho(z,z')/4))\ge \inf\nolimits_{x\in X} \lambda(B(x,\ve/4))>0,
\]
whenever, $z,z'\in Z$, $z\ne z'$. So the balls $B(z,r_z)$, $z\in Z$, have the separation property, and 
   $A$~is  unavoidable,  by Corollary \ref{final-corollary}.
\end{proof}

\section{Appendix}

The following equivalences are of independent interest and may be useful in applications.

\begin{proposition}\label{3G} Let $X$ be an arbitrary set and $G\colon X\times X\to [0,\infty]$
such that $G=\infty$ on the diagonal and $0<G<\infty$ outside the diagonal.
Then the  following properties are equivalent:
\begin{enumerate} 
\item[\rm (i)]  
$G$ has the \emph{triangle property}: There exists $C\ge 1$ such that 
\[ 
\min\{G(x,z),G(y,z)\}\le C G(x,y), \qquad x,y,z\in X.
\] 
\item[\rm (ii)] 
There exists a metric $\rho$ on $X$ and $\g>0$ such that 
$         G\approx \rho^{-\g}$. 
\item[\rm (iii)] 
There exist a metric $\rho$ on~$X$, a decreasing  function $g\colon [0,\infty)\to (0,\infty]$,
  $c_D\ge 1$, and  $\eta_0,\a_0\in (0,1)$ such that  $G\approx g\circ \rho$  and,  for every $r>0$, 
\[
g(r/2)\le c_D g(r)
\und     g(r)\le \eta_0 g(\a_0 r).
\]
\item[\rm (iv)]
There exist a metric $\rho$ on~$X$, a decreasing  function $g\colon [0,\infty)\to (0,\infty]$,
and $c_D\ge 1$  such that $G\approx g\circ \rho$  and $g(r/2)\le c_D g(r)$ for every $r>0$.
\end{enumerate} 
\end{proposition}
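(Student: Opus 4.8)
The plan is to establish the cycle of implications $\text{(i)}\Rightarrow\text{(ii)}\Rightarrow\text{(iii)}\Rightarrow\text{(iv)}\Rightarrow\text{(i)}$, of which only the first carries real content; the other three are routine. For $\text{(ii)}\Rightarrow\text{(iii)}$ I would take $g(r):=r^{-\gamma}$ (with $g(0)=\infty$): then $g$ is decreasing, $G\approx g\circ\rho$, the doubling inequality holds with $c_D=2^\gamma$, and for any fixed $\alpha_0\in(0,1)$ the second inequality holds with $\eta_0:=\alpha_0^\gamma\in(0,1)$, since $g(r)=\alpha_0^\gamma g(\alpha_0 r)$. The implication $\text{(iii)}\Rightarrow\text{(iv)}$ is immediate, one only drops the extra condition on $g$. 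For $\text{(iv)}\Rightarrow\text{(i)}$: given $x,y,z$, the triangle inequality for $\rho$ forces $\max\{\rho(x,z),\rho(z,y)\}\ge\rho(x,y)/2$; assuming say $\rho(x,z)\ge\rho(x,y)/2$ and using that $g$ is decreasing together with $g(r/2)\le c_D g(r)$, one gets $G(x,z)\le c\,g(\rho(x,z))\le c\,g(\rho(x,y)/2)\le cc_D\,g(\rho(x,y))\le c^2c_D\,G(x,y)$, so $\min\{G(x,z),G(y,z)\}\le c^2c_D\,G(x,y)$ and (i) holds with $C=c^2c_D$.

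The heart of the matter is $\text{(i)}\Rightarrow\text{(ii)}$. First I would extract quasi-symmetry of $G$: putting $z=x$ in the triangle property gives $G(y,x)=\min\{G(x,x),G(y,x)\}\le CG(x,y)$, and by the symmetric choice $C\inv G(x,y)\le G(y,x)\le CG(x,y)$. Setting $d:=1/G$ (so $d=0$ on the diagonal and $d\in(0,\infty)$ off it), the triangle property rewrites as the \emph{ultra}-type inequality $d(x,y)\le C\max\{d(x,z),d(z,y)\}$, and $d$ is quasi-symmetric with the same constant. Replacing $d$ by $\tilde d(x,y):=\max\{d(x,y),d(y,x)\}$ yields a genuinely symmetric function with $\tilde d\approx d$ which still satisfies $\tilde d(x,y)\le C\max\{\tilde d(x,z),\tilde d(z,y)\}$.

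It remains to metrize $\tilde d$ up to a power. The key point is that one cannot in general metrize $\tilde d$ directly; instead I would first fix $\beta>0$ so small that $q:=C^\beta\le\sqrt 2$ and put $D:=\tilde d^{\,\beta}$, which is symmetric and satisfies $D(x,y)\le q\max\{D(x,z),D(z,y)\}$. Then I define $\rho(x,y):=\inf\sum_{i=0}^{n-1}D(x_i,x_{i+1})$, the infimum over all finite chains $x=x_0,\dots,x_n=y$; this $\rho$ is automatically symmetric, satisfies the triangle inequality, and obeys $\rho\le D$. The crucial estimate is the chaining inequality $D(x_0,x_n)\le 2\sum_{i}D(x_i,x_{i+1})$ for every chain, which I would prove by induction on $n$: writing $S$ for the total chain sum and splitting at the largest index $j$ with $\sum_{i<j}D(x_i,x_{i+1})\le S/2$ makes both the initial and terminal subchains have sum $\le S/2$, so by the inductive hypothesis $D(x_0,x_j)\le S$ and $D(x_{j+1},x_n)\le S$, while $D(x_j,x_{j+1})\le S$; two applications of the ultra-inequality then give $D(x_j,x_n)\le qS$ and hence $D(x_0,x_n)\le q^2 S\le 2S$, precisely because $q^2\le2$. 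Consequently $\rho\ge\tfrac12 D$, so $\rho$ is a metric with $\rho\approx D=\tilde d^{\,\beta}$, whence $\tilde d\approx\rho^{1/\beta}$ and $G\approx 1/\tilde d\approx\rho^{-1/\beta}$; this is (ii) with $\gamma:=1/\beta$.

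The main obstacle is exactly this last metrization step. The self-referential chaining induction closes only once the quasi-metric constant has been pushed below $\sqrt 2$ by passing to the small power $\beta$, and the bookkeeping of the split — choosing $j$ so that both subchains inherit a sum $\le S/2$ while the single middle edge absorbs the remainder — is what makes the constant $2$ propagate through the induction. Everything else in the cycle is soft.
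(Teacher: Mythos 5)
Your proof is correct, and on three of the four implications it coincides with the paper's: the same choice $g(r):=r^{-\gamma}$ for (ii)$\Rightarrow$(iii), the trivial (iii)$\Rightarrow$(iv), and the identical two-line argument for (iv)$\Rightarrow$(i) based on $\rho(x,z)\ge\rho(x,y)/2$ or $\rho(y,z)\ge\rho(x,y)/2$ plus doubling. The genuine difference is in (i)$\Rightarrow$(ii). The paper, exactly as you do, first extracts quasi-symmetry $G(y,x)\le C\,G(x,y)$ from $G=\infty$ on the diagonal, forms a quasi-metric equivalent to $G^{-1}$ (symmetrizing by a sum, $\tilde\rho(x,y):=G(x,y)^{-1}+G(y,x)^{-1}$, rather than your max), and then simply \emph{cites} the metrization theorem for quasi-metric spaces (Heinonen, Proposition 14.5, i.e.\ the Frink/Mac\'{\i}as--Segovia result) to get a metric $\rho$ and $\gamma>0$ with $\tilde\rho\approx\rho^{\gamma}$. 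You instead reprove that theorem from scratch: raise the symmetrized quasi-metric to a power $\beta$ so that the constant drops to $q=C^{\beta}\le\sqrt2$, define $\rho$ as the chain infimum, and close the Frink-type chaining induction $D(x_0,x_n)\le 2\sum_i D(x_i,x_{i+1})$ by splitting at the largest $j$ whose initial sum is $\le S/2$ (which forces the terminal sum $<S/2$) and using $q^2\le 2$; this induction is sound and yields $\tfrac12 D\le\rho\le D$. So your route is a self-contained, more elementary version of what the paper outsources to the literature; it buys explicit constants and independence from the cited result, at the cost of length. One cosmetic slip: for the non-symmetric $d=1/G$, the triangle property gives $d(x,y)\le C\max\{d(x,z),d(y,z)\}$, not $d(x,y)\le C\max\{d(x,z),d(z,y)\}$ as written (the latter needs quasi-symmetry and degrades the constant to $C^2$). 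This is harmless, since the max-symmetrization $\tilde d$ does satisfy $\tilde d(x,y)\le C\max\{\tilde d(x,z),\tilde d(z,y)\}$ with the original constant, and that is all your chaining argument uses.
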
 

\begin{proof} 
(i)$\,\Rightarrow\,$(ii): 
Since $G=\infty$ on the diagonal, the triangle property  implies that $G(y,x)\le C G(x,y)$
and 
$                              \tilde \rho(x,y):= G(x,y)\inv + G(y,x)\inv$, $x,y\in X$,
defines a quasi-metric on $X$ which is equivalent to $G\inv$. By  \cite[Proposition 14.5]{heinonen}
(see also \cite[pp.\ 1209--1212]{convexity} and  \cite{H-uniform}), there exists a metric $\rho$ for $X$
and $\g>0$ such that $\tilde \rho \approx \rho^{\g}$, and hence $G\approx \rho^{-\g}$.

(ii)$\,\Rightarrow\,$(iii): 
Trivial defining $g(r):=r^{-\g}$.

(iii)$\,\Rightarrow\,$(iv): Trivial.

(iv)$\,\Rightarrow\,$(i): 
Let $c>0$ such that $ c\inv  g\!\circ\! \rho \le G\le cg\!\circ\! \rho$ and   let
$x,y,z\in X$. Since $\rho(x,y)\le \rho(x,z)+\rho(y,z)$, we know that $\rho(x,z)\ge \rho(x,y)/2$
or $\rho(y,z)\ge \rho(x,y)/2$. Therefore 
\begin{eqnarray*} 
           \min\{G(x,z),G(y,z)\}&\le& c\min\{g(\rho(x,z)),g(\rho(y,z))\}\\
                                           &\le &c g(\rho(x,y)/2)\le cc_D g(\rho(x,y))\le c^2c_D G(x,y).
\end{eqnarray*} 
\end{proof} 

\begin{remarks}{\rm
1. If $X$ is a topological space and each function $G(\cdot,x)$, $x\in X$,  
is lower semicontinuous and bounded at infinity,
the $\rho$ is a metric for the topology of $X$.

2. If (iii) holds, then, for every $\eta\in (0,1)$, there exists $\a\in (0,1)$ such that 
$g(r)\le \eta g(\a r)$. Indeed, it suffices to choose $k\in\nat$ such that $\eta_0^k<\eta$
and to take $\a:=\a^k$.
}
\end{remarks}

\bibliographystyle{plain} 

\begin{thebibliography}{10}


\bibitem{aikawa-borichev}
H.~Aikawa and A.A.~Borichev.
\newblock Quasiadditivity and measure property of capacity and the tangential boundary behavior 
of harmonic functions.
\newblock{\em Trans. Amer. Math. Soc.}, 348:1013--1030, 1996.

\bibitem{BH}
J.~Bliedtner and W.~Hansen.
\newblock {\em {Potential Theory -- An Analytic and Probabilistic Approach to
  Balayage}}.
\newblock Universitext. Springer, Berlin, 1986.

\bibitem{gardiner-ghergu}
S.J. Gardiner and M.~Ghergu.
\newblock Champagne subregions of the unit ball with unavoidable bubbles.
\newblock {\em Ann. Acad. Sci. Fenn. Math.}, 35(1):321--329, 2010.

 \bibitem{grzywny}
 T.~Grzywny.
 \newblock On Harnack inequality and H{\"o}lder regularity for isotropic unimodal L{\'e}vy processes. 
 \newblock {\em Potential Anal.} 41: 1--29, 2014.

\bibitem{H-uniform}
W.~Hansen.
\newblock Uniform boundary {H}arnack principle and generalized triangle property.
\newblock{\em J. Funct. Anal.}, 226: 452--484, 2005.

\bibitem{H-course}
W. Hansen.
\newblock{\em Three views on potential theory}.
\newblock A course at Charles University (Prague), Spring 2008.
\newblock http://www.karlin.mff.cuni.cz/~hansen/lecture/ course-07012009.pdf. 

\bibitem{H-fuku}
W. Hansen
\newblock{ Unavoidable collections of balls for processes with isotropic unimodal Green function}.
In \newblock{\em Festschrift Masatoshi Fukushima} (eds. Z.-Q. Chen, N. Jacob, M. Takeda, T. Uemura), World Scientific Press, 2015.

\bibitem{convexity}
W.~Hansen and I.~Netuka.
\newblock Convexity properties of harmonic measures.
\newblock {\em Adv. Math.}, 218(4):1181--1223, 2008.

\bibitem{HN-champagne}
W. Hansen and I. Netuka.
\newblock Champagne subdomains with unavoidable bubbles.
\newblock{\em Adv. Math.} 244:106--116, 2013.

\bibitem{HN-rep-potential}
W. Hansen and I. Netuka.
\newblock Representation of potentials.
  \newblock{\em Rev. Roumaine Math. Pures Appl.}, 59: 93--104, 2014.

\bibitem{HN-unavoidable} 
W. Hansen and I. Netuka.
\newblock Unavoidable sets and harmonic measures living on small sets.
\newblock  {\em Proc. London Math. Soc.} 109: 1601--1629, 2014.


\bibitem{heinonen}
J.~Heinonen.
\newblock {\em Lectures on analysis on metric spaces}.
\newblock Springer, New York, 2001.

\bibitem{maagli-87}
H. Maagli.
\newblock Repr\'esentation int\'egrale des potentiels.
\newblock In {\em S\'eminaire de {T}h\'eorie du {P}otentiel, {P}aris, {N}o.\
  8}, volume 1235 of {\em Lecture Notes in Math.}, pages 114--119. Springer,
  Berlin, 1987.



 \bibitem{mimica-vondracek}
 A.~Mimica and Z.~Vondra\v cek.
 \newblock Unavoidable collections of balls for isotropic L{\'e}vy processes.
  \newblock{\em Stochastic Process. Appl.}, 124(3):1303--1334, 2014.

\end{thebibliography}
\def\cprime{$'$} \def\cprime{$'$}

{\small \noindent 
Wolfhard Hansen,
Fakult\"at f\"ur Mathematik,
Universit\"at Bielefeld,
33501 Bielefeld, Germany, e-mail:
 hansen$@$math.uni-bielefeld.de}

\end{document}